\newtheorem{lemma}{Lemma}[section] 
\newtheorem{theorem}[lemma]{Theorem} 
\newtheorem{proposition}[lemma]{Proposition} 
\newtheorem{corollary}[lemma]{Corollary} 
\newtheorem{question}[lemma]{Question}
\theoremstyle{definition}  
\newtheorem{definitionnodiamond}[lemma]{Definition} 
\newtheorem{examplenodiamond}[lemma]{Example} 
\newtheorem{remarknodiamond}[lemma]{Remark}
\newenvironment{definition}{\begin{definitionnodiamond}}{\hfill\ensuremath\blacklozenge\end{definitionnodiamond}} 
\newenvironment{example}{\begin{examplenodiamond}}{\hfill\ensuremath\blacklozenge\end{examplenodiamond}} 
\newenvironment{remark}{\begin{remarknodiamond}}{\hfill\ensuremath\blacklozenge\end{remarknodiamond}}
\let\xx@thm\@thm 
\renewcommand\qedhere{\qed}
\numberwithin{equation}{section}
\crefname{section}{Section}{Sections} 
\crefname{subsection}{}{Subsections} 
\crefname{definition}{Definition}{Definitions} 
\crefname{definitionnodiamond}{Definition}{Definitions} 
\crefname{example}{Example}{Examples} 
\crefname{question}{Question}{Questions} 
\crefname{examplenodiamond}{Example}{Examples} 
\crefname{remark}{Remark}{Remarks} 
\crefname{remarknodiamond}{Remark}{Remarks} 
\crefname{convention}{Convention}{Conventions} 
\crefname{lemma}{Lemma}{Lemmas} 
\crefname{proposition}{Proposition}{Propositions} 
\crefname{corollary}{Corollary}{Corollaries} 
\crefname{theorem}{Theorem}{Theorems} 
\crefname{assumption}{Assumption}{Assumptions} 
\crefname{equation}{}{} 
\crefname{align}{}{} 
\crefname{proofstep}{Step}{Steps} 
\DeclareMathOperator{\ind}{Ind}
\DeclareMathOperator{\res}{Res} 
\newcommand{\cst}{\ifmmode\mathrm{C}^*\else{$\mathrm{C}^*$}\fi} 
\newcommand{\wst}{\ifmmode\mathrm{C}^*\else{$\mathrm{W}^*$}\fi}
\newcommand\bC{\mathbb C}
\newcommand\bG{\mathbb G} 
\newcommand\bH{\mathbb H}
\newcommand\bS{\mathbb S}
\newcommand\bX{\mathbb X} 
\newcommand\bZ{\mathbb Z} 
\newcommand\cA{\mathcal A} 
\newcommand\cB{\mathcal B}
\newcommand\cE{\mathcal E}
\newcommand\cH{\mathcal H}
\newcommand\cL{\mathcal L}
\newcommand{\Rep}{\mathrm{Rep}}
\numberwithin{equation}{section}
\title{Ergodic actions of the compact quantum group $O_{-1}(2)$} 
\author{Alexandru Chirvasitu,\quad Souleiman Omar Hoche}
\begin{document} 
\date{}

\newcommand{\Addresses}{{
  \bigskip 
  \footnotesize 
		 
  \textsc{Department of Mathematics, University at Buffalo, Buffalo,
    NY 14260-2900, USA}\par\nopagebreak \textit{E-mail address}:
  \texttt{achirvas@buffalo.edu}

  \medskip 
 
  \textsc{Laboratoire de Math\'ematiques de Besan\c{c}on, Universit\'e de Bourgogne Franche-Comt\'e, 16, Route de Gray, 25030 Besan\c{c}on Cedex, France}\par\nopagebreak 
  \textit{E-mail address}: \texttt{hoche.souleiman\_omar@univ-fcomte.fr}

}} 
 
\maketitle

\begin{abstract}
  Among the ergodic actions of a compact quantum group $\mathbb{G}$ on
  possibly non-commutative spaces, those that are {\it embeddable} are
  the natural analogues of actions of a compact group on its
  homogeneous spaces. These can be realized as {\it coideal
    subalgebras} of the function algebra $\mathcal{O}(\mathbb{G})$
  attached to the compact quantum group.

  We classify the embeddable ergodic actions of the compact quantum
  group $O_{-1}(2)$, basing our analysis on the bijective
  correspondence between all ergodic actions of the classical group
  $O(2)$ and those of its quantum twist resulting from the monoidal
  equivalence between their respective tensor categories of unitary
  representations.
  
  In the last section we give counterexamples showing that in general
  we cannot expect a bijective correspondence between embeddable
  ergodic actions of two monoidally equivalent compact quantum groups.
\end{abstract} 
 
\noindent {\em Key words: compact quantum group, ergodic action, idempotent state}

\vspace{.5cm}

\noindent{MSC 2010: 20G42; 16T05; 22D10; 22D25; 22D30}

\tableofcontents

\section*{Introduction}

Ergodic actions of compact groups on possibly noncommutative operator
algebras offer a natural bridge between dynamical systems and
non-commutative geometry. The topic has been studied extensively and
we could not do justice to the literature, but we mention here the
papers \cite{hls-erg,w1,w2,w3}, some of whose material will feature
below in various ways.

With the advent of compact quantum groups introduced and studied by
Woronowicz in \cite{wor-psd,wor-su2,wor-sun} the scope of topics
pertinent to the study of classical compact groups has expanded to
include these. In this context we mention \cite{bc-erg,wng-erg}, where
the authors study ergodic coactions
\begin{equation}\label{eq:intro-act}
  N\to N\otimes A
\end{equation}
of non-commutative ``function algebras'' $A$ of compact quantum groups
on (typically again non-commutative) operator algebras $N$, be it
$C^*$ or von Neumann.

Purely quantum phenomena arise: in stark contrast to ordinary compact
groups, compact quantum groups can act ergodically on type-III factors
(\cite[Corollary 3.7]{wng-erg}). Moreover, closer in spirit to the
contents of this paper, it is explained in \cite[$\S$6]{wng-erg} that
the underlying non-commutative spaces on which a compact quantum group
acts ergodically need not be a quotient by a quantum subgroup. 

The so-called {\it embeddable} ergodic actions constitute a class that
is intermediate between fully general and quotients by quantum
subgroups. In the language of coactions \Cref{eq:intro-act}
embeddability simply means that there is an embedding $N\to A$ that
respects the right $A$-coactions on both sides (see \Cref{se.prelim}
below for precise definitions).

In the present paper we study the class of embeddable ergodic actions
for the compact quantum group $O_{-1}(2)$ obtained by
``cocycle-twisting'' the usual orthogonal group $O(2)$ and fitting
into the family of deformed orthogonal groups $O_q(2)$ for
$q\in [-1,1]$, classifying such actions in \Cref{cor.cls}.

Using the theory of idempotent states (analogous to idempotent
measures on classical locally compact groups) and its relation to
embeddable ergodic actions (\cite{FS}), the authors of \cite{FST} show
that for the less-problematic values $-1<q\le 1$ the embeddable
ergodic actions of the $q$-deformations $U_{q}(2)$, $SU_{q}(2)$, and
$SO_{q}(3)$ do in fact all arise as quotients by quantum
subgroups. \Cref{cor.cls} shows that this contrasts markedly with the
situation for $O_{-1}(2)$.

Cocycle deformation does not alter the monoidal category of
representations of the compact quantum group \cite{BBC}, and
implements an equivalence between the categories of ergodic actions
\cite{de2010actions}.

The natural question arises of whether we also have a natural
bijective correspondence between embeddable ergodic actions of two
monoidally equivalent compact quantum groups.  We will prove below in
\Cref{se.cntrex} that the answer is negative in general, in the strong
sense that even for finite monoidally equivalent quantum groups with
equidimensional underlying function algebras the numbers of
isomorphism classes of embeddable ergodic actions need not be equal.

The paper is structured as follows.

\Cref{se.prelim} contains preparatory material to be used throughout
the paper.

In \Cref{se.$O_{-1}(2)$} we study the ergodic actions of the cocycle
twist $O_{-1}(2)$ and classify those that are embeddable
(\Cref{cor.cls}). We then also describe them in terms of quantum
subgroups of $O_{-1}(2)$ and generalizations thereof (see
\Cref{subse.gen-quot}).
 
Finally, \Cref{se.cntrex} is concerned with studying to what extent
embeddable ergodic actions transport over to a cocycle twist. We will
see in \Cref{cor.nogo2} that even for finite quantum groups, this can
fail in a very strong sense. Along the way, we analyze the cocycle
twists of the dihedral groups $D_K$ analogously to $O_{-1}(2)$ and as
discrete versions of the latter.

\subsection*{Acknowledgements}

We thank Uwe Franz for his insights and advice on the contents of this
paper.

This project was inspired by an interesting talk on
\emph{actions of compact quantum groups\cite{commer:2016}} given by
Kenny De Commer at Bedlewo ( 28 June - 11 July, 2015).  We are
grateful to him both for his talk and for a very useful subsequent
discussion during the workshop "Quantum groups from combinatorics to
analysis" (Caen, 2016).

A.C. is grateful for funding through NSF
grant DMS-1565226.

S.O.H acknowledge support by MAEDI/MENESR and DAAD through the PROCOPE
program.

\section{Preliminaries}\label{se.prelim}

We will need some background on coalgebras and Hopf algebras; for
this, we refer the reader to any of the numerous good sources on the
subject: e.g. \cite{swe,mont,rad}.

Our algebras are all unital, and unless specified otherwise the
`$\otimes$' symbol denotes minimal tensor products when placed between
operator algebras ($C^*$ or, in rare cases, von Neumann algebras) and
the plain, algebraic tensor product when placed between
non-topological algebras.

Given functionals $\phi_i$, $i=1,2$ on a coalgebra $C$ with
comultiplication $\Delta$ we denote by $\phi_1*\phi_2$ their {\it
  convolution} defined by
\begin{equation*}
  \begin{tikzpicture}[auto,baseline=(current  bounding  box.center)]
    \path[anchor=base] (0,0) node (h) {$C$} +(2,.5) node (hh) {$C\otimes C$} +(4,0) node (c) {$\bC$};
    \draw[->] (h) to[bend left=6] node[pos=.5,auto] {$\scriptstyle \Delta$} (hh);
    \draw[->] (hh) to[bend left=6] node[pos=.5,auto] {$\scriptstyle \phi_1\otimes\phi_2$} (c);
    \draw[->] (h) to[bend right=6] node[pos=.5,auto,swap] {$\scriptstyle \phi_1*\phi_2$} (c);    
  \end{tikzpicture}
\end{equation*}

\subsection{Compact quantum groups}
\label{subse.cqg}

We adopt the notion of compact quantum group introduced by
Woronowicz. The present recollection will be very brief, as the theory
is quite expansive. We refer the reader to the excellent surveys
\cite{woronowicz,kt1} for background on the topic.

\begin{definition}\label{def.cqg-cast}
  A \textit{compact quantum group} is a pair $(A,\Delta)$ where $A$ is
  a unital $C^{*}$-algebra and $\Delta:A\rightarrow A\otimes A$ is a
  unital $*$-homomorphism which is
  coassociative:\[(\Delta\otimes id_{A})\circ \Delta=(id_{A}\otimes
    \Delta)\circ \Delta\] and $A$ satisfies the quantum cancellation
  properties:
  \[\overline{Lin}\left((1\otimes
      A)\circ\Delta(A)\right)=\overline{Lin}\left((A\otimes
      1)\circ\Delta(A)\right)=A\otimes A\]

  We denote by $A^{*}$ the set of states of $A$.  One of the most
  important features of compact quantum groups is the existence of a
  unique Haar state $h$, i.e a unique state on the $C^*$-algebra $A$
  such that
\begin{equation}
  \label{eq:haar}
  (h\otimes id_{A})\circ \Delta(a)=(id_{A}\otimes h)\circ
  \Delta(a)=h(a)1, \qquad  \forall a\in A.
\end{equation}

A compact quantum group is said of \emph{Kac type} if $h$ is tracial
i.e $h(ab)=h(ba),\ \forall a,b\in A$.
\end{definition}

The $C^*$-algebra $A$ underlying a compact quantum group has a unique
dense Hopf $*$-algebra $\cA$ (see \cite[Theorem 3.2.2]{kt1}), and much
of the theory of compact quantum groups can be phrased purely
algebraically, in terms of their underlying Hopf algebras
$\cA$. Abstractly, these objects were introduced in \cite{dk} and
following that source we use the following terminology to refer to
them.

\begin{definition}\label{def.cqg-alg}
  A {\it CQG algebra} is a complex Hopf $*$-algebra $\cA$ with a state
  $h:\cA\to \bC$ satisfying \Cref{eq:haar} and which is positive in
  the sense that $h(a^*a)\ge 0$ for all $a\in \cA$, with equality only
  at $a=0$.
\end{definition}

We can largely go back and forth between the $C^*$ and purely
algebraic context for studying compact quantum groups (see e.g. the
discussion in \cite[Sections 4 and 5]{dk}):
\begin{itemize}
\item On the one hand, as mentioned above, for any $C^*$-algebraic
  compact quantum group as in \Cref{def.cqg-cast} one can find a
  unique dense Hopf $*$-subalgebra that meets the criteria of
  \Cref{def.cqg-alg}.
\item Conversely, a CQG algebra has a universal $C^*$-completion that
  turns out to satisfy the requirements of \Cref{def.cqg-cast}.
\end{itemize}

\begin{remark}
  The $C^*$ envelope $C(\bG)$ from the above discussion is sometimes
 denoted by $C^u(\bG)$ (for {\it universal}), to distinguish it
  form other completions of $\cA(\bG)$ which in general exist and are
  also compact quantum groups in the sense of \Cref{def.cqg-cast}. We
  focus mainly on the universal setting, as sketched above.
\end{remark}

For the purposes of this paper it will be convenient to phrase things
primarily in terms of Hopf algebras, reverting to their $C^*$
envelopes whenever necessary. We denote compact quantum groups by bold
face letters such as $\bG$, by $C(\bG)$ the underlying $C^*$-algebra
of the compact quantum group and by $\cA(\bG)$ its dense CQG
algebra.

Moreover, we can also define a canonical von Neumann algebraic version
of $\bG$:

\begin{definition}\label{def.vn}
  $L^\infty(\bG)$ is the von Neumann closure of the GNS representation
  of $\cA(\bG)$ with respect to the Haar state $h$.
\end{definition}
Note that $L^\infty(\bG)$ comes equipped with a coassociative
comultiplication arising as the closure of
\begin{equation*}
  \Delta:\cA(\bG)\to \cA(\bG)\otimes \cA(\bG). 
\end{equation*}
When referring to generic compact quantum groups we will sometimes be
vague on which context we are in, unless it makes a difference.

Examples abound in the sources mentioned thus far; in this paper, the
main compact quantum group is the following ``twisted'' version of the
orthogonal group $O(2)$.

\begin{definition}\label{def-O2} 
  The compact quantum group $O_{-1}(2)$ is defined as the compact
  quantum group with underlying CQG algebra with self-adjoint
  generators $y=(y_{jk})_{1\le j,k\le 2}$ and the relations
\begin{enumerate} 
\item $y$ is orthogonal, i.e.\ the generators $y_{jk}$ are
  self-adjoint and satisfy the unitarity relations
  $y_{1j}y_{1k}+y_{2j}y_{2k}=\delta_{jk}=y_{j1}y_{k1}+y_{j2}y_{k2}$
  for $j,k=1,2$;
\item 
$y_{jk}y_{j\ell} = - y_{j\ell}y_{jk}$ and $y_{kj}y_{\ell j} = - y_{\ell j}y_{kj}$ for $k\not=\ell$; 
\item $y_{jk} y_{\ell m} = y_{\ell m} y_{jk}$ for $j\not= \ell$ and
  $k\not=m$.
\end{enumerate} 
The coproduct, counit and antipode of $O_{-1}(2)$ are given by
\begin{equation*}
  \Delta(y_{jk})=\sum_{i}y_{ji}\otimes y_{ik},\qquad
  \varepsilon(y_{jk})=\delta_{jk},\qquad S(y_{jk})=y_{kj}.  
\end{equation*}
\end{definition}

The notion of a quantum subgroup was introduced by Podle\'s
\cite{podles} for matrix pseudo-groups.

\begin{definition}
  Let $(A,\Delta_A)$ and $(B,\Delta_B)$ be two compact quantum
  groups. Then $(B,\Delta_B)$ is called a \emph{quantum subgroup} of
  $(A,\Delta_A)$, if there is exists a surjective $∗$-algebra
  homomorphism $\pi: A\rightarrow B$ such that
  $\Delta_B\circ \pi=(\pi\otimes \pi)\circ \Delta_A$.
\end{definition}


\begin{definition}
  A \emph{right coideal subalgebra} or {\it coidalgebra} $C$ in a
  compact quantum group $(A,\Delta)$ is a unital $∗$-subalgebra
  $C\subset A$ (complete when $A$ is a $C^*$-algebra) such that
  $\Delta(C)\subset C\otimes A$.
\end{definition}


\subsection{Actions}
\label{subse.act}

We aggregate here material on actions of compact quantum groups on
compact non-commutative spaces. The reader may consult
\cite{commer:2016} for a good survey of the field.

In general, we will denote by $\bG$ a compact quantum group realized
either as a $C^*$-algebra $C(\bG)$ as in \Cref{def.cqg-cast} or as a
CQG algebra $\cA(\bG)$ as described in \Cref{def.cqg-alg}.

Similarly, we denote by $\bX$ a {\it compact quantum} (or {\it
  non-commutative}) {\it space}, i.e. the object dual to a unital
$C^*$-algebra $C(\bX)$.

\begin{definition}
  Let $\bX$ and $\bG$ be as above.  A \textit{right action}
  $\mathbb{X}\overset{\alpha}{\curvearrowleft} \mathbb{G}$ is a
  $*$-morphism
  \begin{equation*}
    \alpha: C(\mathbb{X})\rightarrow C(\mathbb{X})\otimes C(\mathbb{G})
  \end{equation*}
  (a {\it coaction} of $C(\bG)$ on $C(\bX)$) which
  \begin{itemize}
  \item is coassociative in the sense that
    \[(\alpha\otimes Id_{\mathbb{G}})\circ
      \alpha=(id_{\mathbb{X}}\otimes \Delta)\circ \alpha,\] and
  \item
    $\alpha\left(C(\mathbb{X})\right)\left(\bC\otimes
      C(\mathbb{G})\right)$ is dense in
    $C(\mathbb{X})\otimes C(\mathbb{G})$.
  \end{itemize}   
\end{definition}

Keeping with the spirit of translating $C^*$-algebraic concepts into
purely algebraic ones, we note that given an action $\alpha$ as above
there is a dense $*$-subalgebra $\cA=\cA(\alpha)$ (or more improperly
$\cA(\bX)$, since it depends on $\alpha$ and not just $\bX$) such that
$\alpha$ is a completion of a comodule algebra structure denoted by
the same symbol:
\begin{equation}\label{eq:alg-act}
  \alpha: \cA\rightarrow \cA\otimes \cA(\mathbb{G}). 
\end{equation}

\begin{definition}\label{Cond} 
  Let $\mathbb{X}\overset{\alpha}{\curvearrowleft} \mathbb{G}$ and the
  \emph{quantum orbit space} $\mathbb{X}/\mathbb{G}$ i.e the
  $C^*$-algebra
  \begin{equation}\label{eq:orb}
    C(\mathbb{X}/\mathbb{G})=\{a\in C(\mathbb{X})\ |\ \alpha(a)=a\otimes 1\}.
  \end{equation}
\end{definition} 

Passing to the dense subalgebra $\cA=\cA(\alpha)\subseteq C(\bX)$
discussed above, it can be shown that the algebraic version of
\Cref{eq:orb} defined by
\begin{equation*}
  \cA(\bX/\bG):= \{a\in \cA\ |\ \alpha(a)=a\otimes 1\}
\end{equation*}
is dense in $\cA$.

\begin{definition}
  An action $\mathbb{X}\overset{\alpha}{\curvearrowleft} \mathbb{G}$
  is called \textit{ergodic} if $C(\mathbb{X}/\mathbb{G})=\mathbb{C}1$
  or equivalently, if $\cA(\bX/\bG)=\bC$.
\end{definition}

Let \Cref{eq:alg-act} be an ergodic algebraic action. It then turns
out that there is a unique state $h_\alpha$ on $\cA$ that is preserved
by the coaction. We can then form the von Neumann closure
$L^\infty=L^\infty(\cA,h_\alpha)$ of the GNS representation of $\cA$, and
\Cref{eq:alg-act} lifts to a coassociative von Neumann algebra
morphism
\begin{equation*}
  L^\infty\to L^\infty\otimes L^\infty(\bG)
\end{equation*}
(see \Cref{def.vn}).  Once again, we transition freely between the von
Neumann algebraic and the purely algebraic setting for ergodic
actions. The former features mostly in the classical context in
\Cref{subse.erg-clss} below, in order for us to connect with the
literature on ergodic actions of compact (plain, non-quantum) groups.

\begin{definition} 
  Let $\mathbb{X}\overset{\alpha}{\curvearrowleft} \mathbb{G}$. One
  calls $\alpha$ of \textit{quotient type} if there exists a compact
  quantum subgroup $\mathbb{H}\subset \mathbb{G}$ with corresponding
  quotient map $\pi: C(\mathbb{G})\rightarrow C(\mathbb{H})$ and a
  $*$-isomorphism
  \begin{equation*}
    \theta: C(\mathbb{X})\rightarrow C(\mathbb{H}\backslash\mathbb{G})=\left\{g\in
      C(\mathbb{G})\ |\ \left(\pi\otimes
        id_{\mathbb{G}}\right)\Delta(a)=1_{\mathbb{H}}\otimes
      a\right\}
  \end{equation*}
  such that
  \begin{equation*}
    (\theta\otimes id_{\mathbb{G}})\circ\alpha=\Delta\circ \theta,
  \end{equation*}
  i.e. such that $\theta$ respects the $C(\bG)$ coactions on the
  domain and codomain.
\end{definition}

Note that actions of quotient type are automatically ergodic. The
following definition captures a somewhat broader class of ergodic
actions.

\begin{definition} 
  An action $\mathbb{X}\overset{\alpha}{\curvearrowleft} \mathbb{G}$
  is \textit{embeddable} if there exists a faithful
  coaction-preserving $*$-morphism
  \begin{equation*}
    \theta: C(\mathbb{X})\hookrightarrow C(\mathbb{G})
  \end{equation*}
\end{definition} 
\begin{remark} 
  In other words, embeddable ergodic actions can be realized as
  coidalgebras in the Hopf algebra attached to the quantum group.
\end{remark}



\subsection{Monoidal equivalence}
\label{subse.mon}

The following notion of monoidal equivalence was introduced in
\cite{BRV} (see also \cite{de2010actions}).

\begin{definition}\label{def.mon}
  Two compact quantum groups $\bG_{1}=(A_{1},\Delta_{1})$ and
  $\bG_{2}=(A_{2},\Delta_{2})$ are said to be \textit{monoidally
    equivalent} if there exists a bijection
  $\psi: Irred(\bG_{1})\rightarrow Irred(\bG_{1})$ satisfying
  $\psi(\varepsilon)=\varepsilon$, together with linear isomorphisms
  \[\psi: Mor(x_1\otimes \cdots \otimes x_r, y_1\otimes \cdots \otimes
    y_k)\rightarrow Mor(\psi(x_1)\otimes \cdots \otimes \psi(x_r),
    \psi(y_1)\otimes \cdots \otimes \psi(y_k))\] satisfying the
  following conditions:

\[\psi(1)=1,\qquad \psi(S^{*})=(\psi(S))^{*},\qquad\psi(ST)=\psi(S)\psi(T),\qquad\psi(S\otimes T)=\psi(S)\otimes \psi(T)\]
for all $S\subset A_1\,\mbox{and}\,\, T\subset A_2$, whenever the
formulas make sense. Such a collection of maps $\psi$ is called a
\textit{monoidal equivalence} between $\bG_{1}$ and $\bG_{2}$.
\end{definition}

\begin{remark}
  The categories $\Rep(\bG_i)$ of unitary representations of $\bG_i$
  are monoidal $*$-categories, in the sense that there are complex
  conjugate-linear operators
  \begin{equation*}
      \begin{tikzpicture}[auto,baseline=(current  bounding  box.center)]
    \path[anchor=base] (0,0) node (l) {$\mathrm{hom}(x,y)$} +(3,0) node (r) {$\mathrm{hom}(y,x)$};
    \draw[->] (l) to node[pos=.5,auto] {$\scriptstyle *$} (r);
  \end{tikzpicture}
  \end{equation*}
  satisfying the obvious analogues of $*$ structures on
  $*$-algebras. Keeping this in mind, \Cref{def.mon} simply says that
  $\Rep(\bG_i)$ are equivalent as monoidal $*$-categories.
\end{remark}

\begin{remark}
  Concrete examples of monoidally equivalent compact quantum groups
  are given in section 4 of \cite{de2010actions}. As we will recall
  momentarily, our compact quantum group of interest $O_{-1}(2)$ is
  monoidally equivalent to $O(2)$.
\end{remark}

One source of monoidal equivalence is {\it cocycle
  twisting}. \cite{bch-coc} is an excellent source for the material
that we once more only skim here. As announced above, we work mainly
with plain, non-topologized Hopf algebras.

A {\it 2-cocycle} on a CQG algebra $\cH$ is map
$\lambda:\cH\otimes \cH\to \bC$ with convolution inverse
$\lambda^{-1}$ and satisfying certain associativity-like conditions
that specialize to it being a cocycle in the usual sense when
$\cH=\bC\Gamma$ is the group algebra of a discrete group (see
\cite[Example 1.3]{bch-coc}).

A $2$-cocycle allows us to deform the multiplication of $\cH$. In Sweedler notation
\begin{equation*}
  \cH\ni a\mapsto a_1\otimes a_2 = \Delta(a)\in \cH\otimes \cH
\end{equation*}
the deformed multiplication is
\begin{equation*}
  a\bullet b = \lambda(a_1,b_1)a_2b_2\lambda^{-1}(a_3,b_3).
\end{equation*}
The cocycle conditions ensure that this equips the underlying space of
$\cH$ with an associative algebra structure, and preserving the
comultiplication we obtain another CQG algebra $\cH^\lambda$ (see
\cite[$\S$3.3]{bch-coc}).

As explained in \cite[$\S$3.3]{bch-coc}, there is a monoidal
equivalence $\lambda\triangleright$ between the category of
$\cH$-comodules (i.e. $\Rep(\bG)$ if $\cH$ is the CQG algebra of the
compact quantum group $\bG$) and that of $\cH^\lambda$-comodules.

The instance of cocycle twisting that we are most concerned with here
is

\begin{example}
  Let $\cH$ be the CQG algebra of the orthogonal group $O(2)$, which
  surjects onto the CQG algebra $\bC \bZ_2^2$ of the diagonal
  subgroup of $O(2)$.

  Now, the $2$-cohomology $H^2(\bZ_2^2,\bC)$ is isomorphic to $\bZ/2$,
  and hence we can choose a $2$-cocycle that represents the unique
  non-trivial class. Such a cocycle then precomposes with the
  surjection
  \begin{equation*}
    \cH^{\otimes 2} \to (\bC\bZ_2^2)^{\otimes 2}
  \end{equation*}
  to give a 2-cocycle on $\cH$ in the sense of the present
  subsection. The twist $\cH^\lambda$ will be precisely the CQG
  algebra of $O_{-1}(2)$, as described in \Cref{def-O2}.
\end{example}

Recall the following paraphrase of \cite[Theorem 7.3]{de2010actions}.

\begin{theorem}\label{ergo_corres}
  A monoidal equivalence between the categories of representations of
  two compact quantum groups $\bG_i$, $i=1,2$ induces an equivalence
  between their categories of ergodic actions.
\end{theorem}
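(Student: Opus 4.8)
The plan is to show that the category of ergodic actions of a compact quantum group $\bG$ is \emph{intrinsic} to the tensor $*$-category $\Rep(\bG)$: an ergodic action, together with its equivariant morphisms, can be reconstructed from data living entirely inside $\Rep(\bG)$ (objects, morphism spaces, the monoidal product and the $*$-operation). Once this is established, a monoidal equivalence $\psi:\Rep(\bG_1)\to\Rep(\bG_2)$ as in \Cref{def.mon} transports the reconstruction data verbatim, and this produces the desired equivalence between the two categories of ergodic actions. This is the point of view of \cite{BRV}, refined in \cite{de2010actions}.

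First I would set up the spectral decomposition. For an ergodic action $\alpha:\cA\to\cA\otimes\cA(\bG)$ the comodule algebra decomposes as $\cA=\bigoplus_{x\in\mathrm{Irr}(\bG)}\cA_x$ into isotypic components, and ergodicity forces each multiplicity space $K_x=\Mor_\bG(x,\cA)$ to be finite-dimensional (bounded by $\dim H_x$), with $\cA_x\cong H_x\otimes K_x$ as a $\bG$-comodule. The remaining structure of $\cA$ is then encoded categorically. Because the multiplication $\cA\otimes\cA\to\cA$ is $\bG$-equivariant, its components factor as
\[
  (H_x\otimes K_x)\otimes(H_y\otimes K_y)\;\longrightarrow\;\bigoplus_{z}(H_z\otimes K_z),
\]
that is, through the morphism spaces $\Mor(z,x\otimes y)$ tensored with linear maps $K_x\otimes K_y\to K_z$. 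In the same way the unit is a map $\bC\to K_\varepsilon$, the involution is governed by the conjugation $*$ on $\Rep(\bG)$ together with an antilinear structure on the $K_x$, and the unique invariant state is recovered from the categorical traces (the standard solutions to the conjugate equations). The algebra axioms — associativity, unitality and $*$-compatibility — then become coherence identities between composites of these structure morphisms, all phrased purely in $\Rep(\bG)$.

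With this dictionary in hand I would apply the monoidal equivalence. By the defining properties $\psi(\varepsilon)=\varepsilon$, $\psi(ST)=\psi(S)\psi(T)$, $\psi(S\otimes T)=\psi(S)\otimes\psi(T)$ and $\psi(S^*)=\psi(S)^*$ from \Cref{def.mon}, applying $\psi$ to every structure morphism sends a system of reconstruction data for $\bG_1$ to one for $\bG_2$; the coherence identities, being equations between composites of morphisms, are preserved because $\psi$ is a $*$-preserving monoidal functor. Running the reconstruction in reverse then produces an ergodic action of $\bG_2$. An equivariant morphism of ergodic actions is itself determined by $\bG$-morphisms between the multiplicity data, so it transports as well; functoriality together with the fact that the inverse monoidal equivalence $\psi^{-1}$ yields an inverse functor gives the claimed equivalence of categories.

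The main obstacle is the reconstruction step rather than the transport. Showing that an abstract collection of multiplicity spaces with structure maps satisfying the categorical coherence conditions actually integrates to a genuine operator-algebraic ergodic action carrying a \emph{faithful} invariant state is the delicate point: one must verify positivity of the reconstructed invariant inner product, and this is exactly where the $*$-structure of $\Rep(\bG)$ — the standard solutions to the conjugate equations and the positivity of the associated categorical trace — enters. It is precisely because $\psi$ is a $*$-functor, so that $\psi(S^*)=\psi(S)^*$ and conjugate objects are transported faithfully, that positivity, and hence the full analytic content of the action, survives the transport.
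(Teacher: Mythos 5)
The paper does not actually prove this statement: it is presented as a direct paraphrase of \cite[Theorem 7.3]{de2010actions}, so there is no internal proof to compare yours against. Your outline is nonetheless a legitimate route, and it is worth noting how it differs from the cited one. De Rijdt and Vander Vennet do not run an abstract Tannakian reconstruction; they work with the linking algebra (bi-Galois object) attached to the monoidal equivalence of \Cref{def.mon} and transport an ergodic action of $\bG_1$ to one of $\bG_2$ by a concrete cotensor-product construction, in which positivity and faithfulness of the invariant state on the transported algebra are inherited from the original action and from the linking algebra rather than re-derived from categorical traces. Your version instead asserts that an ergodic action is equivalent to a system of multiplicity spaces $K_x$ with structure maps and coherence/positivity conditions internal to $\Rep(\bG)$; that equivalence is itself a nontrivial theorem, and you correctly identify the integration step --- showing the reconstructed data yields a genuine comodule algebra with a faithful positive invariant state --- as the crux, but you do not carry it out. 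As written, the argument therefore has a genuine gap precisely at the reconstruction step; everything after it (transport of the data along $\psi$ and preservation of the coherence identities) is routine and correct.

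One factual slip: the bound $\dim K_x\le\dim H_x$ is a theorem for compact \emph{groups} (H\o egh-Krohn--Landstad--St\o rmer); for compact quantum groups the multiplicity is only bounded by the \emph{quantum} dimension of $H_x$ and can strictly exceed the classical dimension --- this is the entire point of \cite{BRV} (``ergodic coactions with large multiplicity''), building on \cite{bc-erg}. Finite-dimensionality of $K_x$, which is all your argument actually uses, does hold, but the specific bound you quote is false in the generality of the statement.
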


We denote by $\lambda\triangleright$ all such equivalences arising in
this context: the monoidal equivalence between categories of
representations, the equivalence between the categories of ergodic
actions, etc. Context will suffice to determine the correct
interpretation of the symbol $\lambda\triangleright$ in each case.
 
\Cref{ergo_corres} motivates
 
\begin{question}\label{qu.mon-erg} 
  Let $\bG_1$ and $\bG_2$ be two monoidally equivalent compact quantum
  group. Is there a natural bijective correspondence between their
  embeddable ergodic actions?
\end{question}

Even though the question is rather ill-posed and ambiguous, we will
see below that the answer is negative in as strong a sense as
possible, even for finite quantum groups.

\subsection{Ergodic actions of classical compact groups}
\label{subse.erg-clss}

Here we recall various generalities on ergodic actions of (ordinary,
non-quantum) compact groups on possibly non-commutative operator
algebras for later use. Our main references for all of this are the
seminal papers \cite{w1,w2,w3}.

We work in the context of actions on von Neumann algebras of compact
groups $\bG$ on von Neumann algebras (as in the papers referenced
above). In that setting, an action is {\it ergodic} if the fixed-point
subalgebra consists of scalars only.

The general theory of ergodic actions of compact groups on von Neumann
algebras is developed in \cite{w1} and deployed later in \cite{w2,w3}
for classification purposes. First, we recall the following simple
procedure for producing ergodic actions.

\begin{definition}\label{def.ind}
  Let $\bH\le \bG$ be an inclusion of compact groups and
  $\bH \overset{\alpha}{\curvearrowright} N$ an $\bH$-action on a von
  Neumann algebra. The {\it induced representation}
  $\ind_{\bH}^{\bG}(N)$ is the von Neumann algebra
  \begin{equation*}
    \{L^\infty(\bG)\otimes N \ni f:\bG\to N\ |\ f(gh^{-1})=\alpha_h(f(g))\}
  \end{equation*}
  equipped with the $\bG$-action given by
  \begin{equation*}
    g\triangleright f = f(g^{-1}\bullet)
  \end{equation*}
\end{definition}
Induction is the right adjoint to the restriction functor from
$\bG$-actions to $\bH$-actions, from which it follows immediately that
it preserves ergodicity: the induction to $\bG$ of an ergodic
$\bH$-action is again ergodic.

The following familiar concept will allow us to further explicate the
ergodic actions of the compact groups we study.

\begin{definition}\label{def.proj}
  Lets $\bG$ be a compact group, $V$ a finite-dimensional Hilbert
  space, and $U(V)$ its unitary group. A \textit{projective unitary
    representation} of $\bG$ on $V$ is a map $\pi:\bG\to U(V)$ such
  that
\begin{equation}
  \pi(x)\pi(y)=\lambda(x,y)\pi(xy)\quad\forall x,y\in \bG
\end{equation}
where $\lambda:G\times G\to U(V)$ is called the associated multiplier.
\end{definition}

It is easy to see that if $V$ is an {\it irreducible} projective
representation of the compact subgroup $\bH\subseteq \bG$ then $B(V)$
is ergodic over $\bH$ and hence $\ind_{\bH}^{\bG} B(V)$ is an ergodic
$\bG$-action.

\begin{definition}\label{def.rig}
  A compact group $\bG$ is {\it ergodically rigid} if all of its
  ergodic representations are of the form $\ind_{\bH}^{\bG}B(V)$ for
  some closed subgroup $\bH\le \bG$ and some irreducible projective
  $\bH$-representation $V$ of $\bH$.
\end{definition}

Recall \cite[Theorem 20]{w1} (slightly paraphrased):

\begin{theorem}\label{th.typ1}
  $\bG$ is ergodically rigid in the sense of \Cref{def.rig} if and
  only if its only ergodic actions are on type-I von Neumann algebras.
  \qedhere
\end{theorem}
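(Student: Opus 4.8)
For the forward implication, suppose $\bG$ is ergodically rigid, so that every ergodic action has the form $\ind_{\bH}^{\bG}\B(V)$ as in \Cref{def.rig}; it then suffices to observe that each such induced algebra is type I. Unwinding \Cref{def.ind}, the induced algebra consists of the $\bH$-equivariant $\B(V)$-valued functions on $\bG$, which we may regard as the algebra of measurable sections of the associated bundle over the homogeneous space $\bG/\bH$ whose fiber is the type-I factor $\B(V)$. Such a section algebra is a direct integral of type-I factors and hence itself type I; ergodicity of the induced action is already recorded in the discussion following \Cref{def.ind}.

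For the converse, let $\bG\curvearrowright N$ be an arbitrary ergodic action with $N$ of type I, and reconstruct it as an induced action in several steps. First, the center $Z(N)$ is a $\bG$-invariant commutative subalgebra on which $\bG$ still acts ergodically, since its fixed points lie inside the scalars $\bC 1$. An ergodic action on an abelian von Neumann algebra is exactly a transitive $\bG$-space, so $Z(N)\cong\Linf(\bG/\bH)$ for a suitable closed subgroup $\bH\le\bG$, with $\bG$ acting by translation. Next, the type-I structure of $N$ lets us decompose it as a direct integral of type-I factors over the spectrum $\bG/\bH$ of $Z(N)$; because $\bG$ permutes the fibers along its transitive action on the base, all fibers are isomorphic to a single factor $\B(V)$, and the stabilizer $\bH$ of the base point acts on this distinguished fiber.

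Global ergodicity now forces the fiber action of $\bH$ on $\B(V)$ to be ergodic, and since every automorphism of $\B(V)$ is inner this action is $h\mapsto\mathrm{Ad}(\pi(h))$ for a projective unitary representation $\pi\colon\bH\to U(V)$ in the sense of \Cref{def.proj}, with ergodicity equivalent to irreducibility of $\pi$. A Mackey-style imprimitivity argument then identifies the global action, described by a transitive family of fibers $\B(V)$ glued by the $\bH$-action $\mathrm{Ad}\circ\pi$, with the induced action $\ind_{\bH}^{\bG}\B(V)$, exhibiting $\bG$ as ergodically rigid.

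The main obstacle is this last step: making the fiberwise decomposition genuinely $\bG$-equivariant and recognizing the reassembled algebra as the induced one. This is an operator-algebraic imprimitivity theorem, whose delicate points are a measurable trivialization of the bundle of factors over $\bG/\bH$ compatible with the cocycle carried by $\pi$, and the verification that $V$ is finite-dimensional. The latter follows from the finiteness of spectral multiplicities for ergodic actions, each isotypic component of $N$ occurring with multiplicity bounded by the dimension of the corresponding irreducible $\bG$-representation.
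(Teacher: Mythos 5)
The paper does not prove this statement at all: it is quoted (``slightly paraphrased'') from \cite[Theorem 20]{w1}, and the \verb|\qedhere| in the statement signals that the proof is delegated to that reference. So there is no in-paper argument to compare yours against; the relevant benchmark is Wassermann's general theory, and your sketch does follow that route. The forward implication is fine as written: an induced algebra $\ind_{\bH}^{\bG}\B(V)$ is a direct integral over $\bG/\bH$ of copies of the type-I factor $\B(V)$, hence type I, and ergodicity of induction is already recorded after \Cref{def.ind}. The reductions in your converse are also the right ones, and each is individually sound: ergodicity of the restricted action on $Z(N)$, essential transitivity of ergodic actions of compact groups on commutative von Neumann algebras (this does need a small argument using compactness --- pass to the norm-continuous elements, push the invariant measure to the orbit space, and use that orbits are closed), innerness of automorphisms of $\B(V)$ giving an irreducible projective representation, and finite-dimensionality of $V$ from the spectral multiplicity bound of \cite[Theorem 1]{w1}.

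The genuine gap is the one you yourself flag: the ``Mackey-style imprimitivity argument'' identifying $N$, equipped with its direct-integral decomposition over $\bG/\bH$, with the induced algebra $\ind_{\bH}^{\bG}\B(V)$ \emph{equivariantly}. This is not a routine verification --- it is precisely the induction structure theorem \cite[Corollary 8]{w1} (every ergodic action is induced from an ergodic action of a closed subgroup on a factor), which is the technical heart of Wassermann's paper and is also what the present paper leans on elsewhere (see the proof of \Cref{pr.ext-rig}). Asserting it reduces your converse to a restatement of the theorem being proved: once one knows the action is induced from an ergodic $\bH$-action on a type-I factor, the rest (that the factor is $\B(V)$ with $V$ finite-dimensional and the action is $\mathrm{Ad}\circ\pi$ for an irreducible projective $\pi$) is the easy part. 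So as a self-contained proof the proposal is incomplete, but the gap is correctly located, and filling it by citing \cite[Corollary 8]{w1} --- as the paper implicitly does --- would make the argument complete.
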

Note also that according to \cite[Theorem, p. 309]{w3} $SU(2)$ is
ergodically rigid. Here, we first need the following simple remark.

\begin{lemma}\label{le.ab-rig}
  Abelian compact groups are ergodically rigid.
\end{lemma}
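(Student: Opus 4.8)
The plan is to reduce the statement to a type-I criterion via \Cref{th.typ1}. That theorem equates ergodic rigidity of $\bG$ with the property that every ergodic action is carried by a type-I von Neumann algebra, so it suffices to show that an arbitrary ergodic action $\bG\curvearrowright N$ of an abelian compact group $\bG$ lands on a type-I algebra. I would extract this from the spectral structure of the action, using heavily that the irreducible representations of an abelian $\bG$ are all one-dimensional.

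First I would set up the isotypic decomposition $N=\overline{\bigoplus_{\chi\in\hh{\bG}}N_\chi}$, where $\hh{\bG}$ is the discrete dual group and $N_\chi=\{x\in N:g\triangleright x=\chi(g)x\}$ is the spectral subspace attached to the character $\chi$. The crucial external input is the multiplicity bound for ergodic actions of compact groups (\cite{hls-erg,w1}): $\dim N_\chi\le\dim\chi$. Since every $\chi$ is one-dimensional, each $N_\chi$ is at most one-dimensional. As the action is by automorphisms, $N_\chi\cdot N_\psi\subseteq N_{\chi\psi}$ and $N_\chi^{*}=N_{\chi^{-1}}$, so the \emph{spectrum} $\Gamma:=\{\chi:N_\chi\neq 0\}$ is a subgroup of $\hh{\bG}$. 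Ergodicity gives $N_\varepsilon=\bC 1$, whence for a nonzero $u_\chi\in N_\chi$ one has $u_\chi^{*}u_\chi\in N_\varepsilon=\bC 1$; normalizing (using the invariant trace supplied by the ergodic action), I may take each $u_\chi$ to be a unitary, unique up to a scalar.

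These unitaries then satisfy $u_\chi u_\psi=\sigma(\chi,\psi)\,u_{\chi\psi}$ for scalars $\sigma(\chi,\psi)\in\bT$, and associativity forces $\sigma$ to be a $2$-cocycle on $\Gamma$, exhibiting $N$ as the von Neumann closure of the twisted group algebra $\cL_\sigma(\Gamma)$ with the $\bG$-action implemented by the characters. To read off the type I would pass to the antisymmetric commutation bicharacter $\beta(\chi,\psi)=\sigma(\chi,\psi)\sigma(\psi,\chi)^{-1}$: its radical $R=\{\chi:\beta(\chi,\cdot)\equiv 1\}$ indexes the centre, and on the quotient $\Gamma/R$ the induced bicharacter is nondegenerate, so $N$ decomposes over its centre $\cL_\sigma(R)$ with factor part the twisted group factor $\cL_{\bar\sigma}(\Gamma/R)$.

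The crux, and the step I expect to be the main obstacle, is showing that this factor part is type I, equivalently that the symplectic quotient $\Gamma/R$ is finite: a nondegenerate bicharacter on an \emph{infinite} abelian group yields a II$_1$ factor (the irrational rotation algebra arising from $\bZ^2$ is the archetype), so type-I-ness is not automatic from the multiplicity bound alone. For the abelian compact groups at issue this finiteness is forced by the structure of $\hh{\bG}$: when $\bG$ is finite the spectrum $\Gamma$ is itself finite, giving a matrix algebra $\cL_{\bar\sigma}(\Gamma/R)\cong M_n(\bC)$, while for the one-dimensional torus $\hh{\bG}=\bZ$ carries trivial $2$-cohomology, so $\sigma$ is a coboundary, $\beta\equiv 1$, and $N$ is already commutative. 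In either case $N$ is type I, and \Cref{th.typ1} delivers ergodic rigidity. The delicate point is thus the control of $H^2(\hh{\bG},\bT)$ together with the rank of $\hh{\bG}$, and handling this uniformly is the part that requires genuine care.
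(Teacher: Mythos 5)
Your route is genuinely different from the paper's. The paper never passes through the type-I criterion of \Cref{th.typ1}: after the multiplicity bound $\dim M_\chi\le 1$ from \cite[Theorem 1 (a)]{w1} and the observation that the spectrum is a subgroup, it realizes $M$ as induced from a closed subgroup acting with full multiplicity and then quotes the classification of full-multiplicity ergodic actions from \cite[Theorem 2]{w2} to land directly on the normal form $\ind_{\bH}^{\bG}B(V)$ demanded by \Cref{def.rig}. You instead make the algebra explicit as a twisted group von Neumann algebra $\cL_\sigma(\Gamma)$ of the spectrum and read off its type from the commutation bicharacter $\beta$. The two arguments agree on the first half (one-dimensional spectral subspaces spanned by unitaries, subgroup spectrum); they diverge in how the structure of $M$ is then exploited.

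The ``obstacle'' you flag at the end is not a gap in your exposition so much as a genuine obstruction, and it shows the lemma is false in the stated generality: for $\bG=\bT^2$ the gauge action on the irrational rotation algebra is ergodic with weak closure the hyperfinite type $\mathrm{II}_1$ factor --- precisely your $\cL_{\bar\sigma}(\Gamma/R)$ with $\Gamma=\bZ^2$ and $\beta$ nondegenerate --- so by \Cref{th.typ1} the two-torus is \emph{not} ergodically rigid. No uniform control of $H^2(\hh{\bG},\bT)$ is possible, and the paper's own proof silently skips this point (its induction step plus the appeal to \cite{w2} never confronts the cohomological obstruction). What rescues the paper is that the lemma is only ever invoked for $\bT$ (in \Cref{th.list}) and for finite cyclic groups (in \Cref{se.cntrex}), i.e.\ for duals of rank at most one, and these are exactly the two cases you do settle (finiteness of $\Gamma$, respectively $H^2(\bZ,\bT)=0$). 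So your argument is complete for every instance in which the lemma is actually used, and it has the additional merit of locating the precise boundary of validity of the statement; the statement itself, and the paper's proof of it, should be restricted to that setting.
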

\begin{proof}
  Let $\bG$ be a compact abelian group acting ergodically on a von
  Neumann algebra $M$. For a character $\chi:\bG\to \bS^1$ we denote
  by $M_\chi$ the spectral subspace of $M$ (i.e. those elements of $M$
  which $\bG$ scales via $\chi$).

  According to \cite[Theorem 1 (a)]{w1} we have
  $\mathrm{dim}(M_\chi)\le 1$. For a non-zero $x\in M_\chi$ we have
  \begin{equation*}
    0\ne x^*x\in M_1 = \bC,
  \end{equation*}
  meaning that $x$ is a scalar multiple of the identity. Those $\chi$
  for which $M_\chi\ne 0$ then form a subgroup
  \begin{equation*}
    \widehat{\bG/\bH}\le \widehat{\bG}
  \end{equation*}
  of the character group of $\bG$ (for some closed subgroup
  $\bH\le \bG$), and we have
  \begin{equation*}
    M\cong \ind_{\bH}^{\bG} N
  \end{equation*}
  for a {\it full-multiplicity} ergodic action of $\bH$ on a von
  Neumann algebra $N$ in the sense of \cite{w2}, i.e. such that for
  each character $\chi\in \widehat{\bH}$ the spectral space $N_\chi$
  has maximal dimension $1$.

  In turn, \cite[Theorem 2]{w2} then shows that the full-multiplicity
  ergodic actions of $\bH$ are precisely $B(V)$ for irreducible
  projective representations $V$.
\end{proof}

We also need the following result on the persistence of ergodic
rigidity under certain extensions.

\begin{proposition}\label{pr.ext-rig}
  Let
  \begin{equation*}
    1\to \bH\to \bG\to \Gamma\to 1
  \end{equation*}
  be an extension of a finite group $\Gamma$ by an ergodically rigid
  compact group $\bH$. Then, $\bG$ is ergodically rigid.
\end{proposition}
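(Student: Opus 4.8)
The plan is to verify the type-$\mathrm{I}$ criterion of \Cref{th.typ1}: it suffices to show that every ergodic action of $\bG$ on a von Neumann algebra $M$ forces $M$ to be of type $\mathrm{I}$. The first observation I would record is that $M$ is automatically of \emph{pure} type. Indeed, the central projections $z_{\mathrm{I}}, z_{\mathrm{II}}, z_{\mathrm{III}}$ cutting out the type-$\mathrm{I}$, type-$\mathrm{II}$ and type-$\mathrm{III}$ summands are canonically attached to $M$ and hence preserved by every $*$-automorphism; in particular they are $\bG$-invariant, so they lie in $Z(M)\cap M^{\bG}=\bC 1$ by ergodicity. Being projections, each equals $0$ or $1$, so exactly one of the three coarse types occurs on $M$.

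Next I would exploit the normal subgroup $\bH\trianglelefteq\bG$. Since $\bH$ is normal, its fixed-point algebra $M^{\bH}$ is globally $\bG$-invariant, and the residual action factors through $\Gamma=\bG/\bH$, acting on $M^{\bH}$ with fixed points $(M^{\bH})^{\Gamma}=M^{\bG}=\bC 1$; that is, $\Gamma$ acts ergodically on $M^{\bH}$. Because $\Gamma$ is finite, the standard multiplicity bounds for ergodic actions (the isotypic component of an irreducible $\pi\in\widehat{\Gamma}$ has dimension at most $(\dim\pi)^2$, cf. \cite{w1,hls-erg}) yield $\dim M^{\bH}\le\sum_{\pi\in\widehat{\Gamma}}(\dim\pi)^2=|\Gamma|<\infty$. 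Thus $M^{\bH}$ is finite-dimensional and, in particular, admits a nonzero minimal projection $p$.

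The key manoeuvre is then to cut down by $p$ in order to manufacture a genuinely \emph{ergodic} $\bH$-action. As $p\in M^{\bH}$ is $\bH$-fixed, $\bH$ acts on the corner $pMp$, and I would check that its fixed-point algebra is $(pMp)^{\bH}=pMp\cap M^{\bH}=pM^{\bH}p=\bC p$, the last equality by minimality of $p$; hence $\bH$ acts ergodically on $pMp$. Since $\bH$ is ergodically rigid, \Cref{th.typ1} forces $pMp$ to be of type $\mathrm{I}$. Finally, reduction by a projection preserves the coarse type, so a corner of a purely type-$\mathrm{II}$ or type-$\mathrm{III}$ algebra would again be of that type; as $pMp$ is type $\mathrm{I}$ and $M$ is of pure type, I conclude $M$ is type $\mathrm{I}$, which is exactly what \Cref{th.typ1} requires for ergodic rigidity of $\bG$.

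The only genuinely delicate points I anticipate are the two structural inputs: that $M^{\bH}$ is finite-dimensional (which rests on the multiplicity bound for ergodic actions together with the ergodicity of the $\Gamma$-action furnished by normality of $\bH$), and the clean identification $(pMp)^{\bH}=\bC p$ that turns $p$ into the unit of an ergodic $\bH$-algebra. Both are where the argument carries its actual content; the purity of type and the stability of type under reduction are routine von Neumann algebra bookkeeping.
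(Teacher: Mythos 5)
Your proof is correct and follows essentially the same route as the paper's: reduce via \Cref{th.typ1} to showing that $M$ is of type I, observe that $M^{\bH}$ carries an ergodic action of the finite group $\Gamma$ (by normality of $\bH$) and is therefore finite-dimensional, cut down by a minimal projection $p\in M^{\bH}$, and apply ergodic rigidity of $\bH$ to the corner $pMp$. The only (harmless) divergence is in how the type of the corner is propagated back to $M$: the paper first invokes \cite[Corollary 8]{w1} to reduce to the case where $M$ is a factor, whereas you instead note that the canonical central type projections are $\bG$-invariant and hence scalar by ergodicity, so that $M$ is of pure type and the type-I corner determines it --- both are routine and equally valid.
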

\begin{proof}
  According to the already-cited \cite[Theorem 20]{w1}, it suffices to
  prove that for every ergodic action of $\bG$ on a von Neumann
  algebra $M$, the latter is of type I. Furthermore, recall from
  \cite[Corollary 8]{w1} that every ergodic action is induced from an
  ergodic action of a closed subgroup on a factor, so we may as well
  assume that $M$ is a factor.

  Now consider the von Neumann subalgebra $M^{\bH}$ fixed by $\bH$. It
  is acted upon ergodically by $\Gamma$, and hence is
  finite-dimensional by \cite[Theorem 1 (a)]{w1}.

  Let $p$ be a minimal projection of $M^{\bH}$. The factor $pMp$ then
  admits an ergodic action by $\bH$, and hence, by the assumption of
  ergodic rigidity, must be of type I. Since $M$ is a factor with a
  corner $pMp$ of type I, it must itself be of type I. As anticipated
  above, this finishes the proof via \cite[Theorem 20]{w1}.
\end{proof}

We end this section with the following simple consequence of the
general theory recalled above; it will be of use to us in the
classification results to follow.

\begin{lemma}\label{le.upto}
  Let $\bH_i$, $i=1,2$ be two closed subgroups of a compact group
  $\bG$ with respective irreducible projective representations
  $V_i$. Then, the induced representations $M_i=\ind_{\bH_i}^{\bG}B(V_i)$
  are isomorphic if and only if there is an element $g\in \bG$ such
  that
  \begin{itemize}
  \item $g^{-1}\bH_1 g=\bH_2$;
  \item the pullback through the isomorphism 
    \begin{equation*}
      \mathrm{ad}_{g^{-1}}=g^{-1}\bullet g:\bH_1\to \bH_2
    \end{equation*}
    of $B(V_2)$ is isomorphic to the $\bH_1$-module algebra
    $B(\bH_1)$.
  \end{itemize}
\end{lemma}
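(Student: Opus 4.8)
The plan is to recover, from the $\bG$-equivariant isomorphism class of an induced algebra $M=\ind_{\bH}^{\bG}B(V)$, the two ingredients that enter its construction: the conjugacy class of $\bH$, which I will read off from the center of $M$, and the $\bH$-module algebra $B(V)$, which I will read off from the fiber of $M$ over the base point of $\bG/\bH$. The two implications are handled separately.

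For the backward implication, suppose we are given $g\in\bG$ with $g^{-1}\bH_1 g=\bH_2$ together with an algebra isomorphism $\beta\colon B(V_1)\to B(V_2)$ intertwining the adjoint $\bH_1$-action $\alpha^1$ on $B(V_1)$ with the pulled-back action $h\cdot T=\alpha^2_{g^{-1}hg}(T)$ on $B(V_2)$, where $\alpha^i$ is the adjoint action of $\bH_i$ on $B(V_i)$. I would then define $\Phi\colon M_1\to M_2$ by $(\Phi f)(z)=\beta\big(f(zg^{-1})\big)$ and check directly that $\Phi$ carries the covariance condition defining $M_1$ into that defining $M_2$ (the relation $g\bH_2 g^{-1}=\bH_1$ is exactly what makes this work), that $\Phi$ is a $*$-isomorphism with inverse built from $\beta^{-1}$ and right translation by $g$, and that it intertwines the two $\bG$-actions $g'\triangleright f=f(g'^{-1}\bullet)$. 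These verifications are routine once the left/right conventions are fixed.

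For the forward implication, let $\Phi\colon M_1\to M_2$ be a $\bG$-equivariant $*$-isomorphism. The first step concerns the \emph{center}. Since $B(V)$ is a full matrix algebra, hence a factor, an element $f\in M$ is central precisely when $f(x)\in Z(B(V))=\bC 1$ for every $x$, and the covariance condition then forces $f$ to be constant on the cosets $x\bH$; thus $Z(M)\cong L^\infty(\bG/\bH)$ as an abelian von Neumann algebra carrying the left-translation $\bG$-action. Restricting $\Phi$ to centers yields a $\bG$-equivariant isomorphism $L^\infty(\bG/\bH_1)\cong L^\infty(\bG/\bH_2)$, equivalently a $\bG$-equivariant isomorphism of the homogeneous spaces $\bG/\bH_1$ and $\bG/\bH_2$. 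Any such map is of the form $x\bH_1\mapsto xg\bH_2$, which is well defined exactly when $g^{-1}\bH_1 g=\bH_2$; this produces the required element $g$ and establishes the first bullet.

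It remains to extract the second bullet, and this is where I expect the \textbf{main obstacle} to lie. Using the $g$ just found I would reduce to the case $\bH_1=\bH_2=\bH$ with $\Phi$ lying over $\mathrm{id}_{\bG/\bH}$, so that $\Phi$ becomes an isomorphism of the two matrix-algebra bundles $M_i$ over $\bG/\bH$; evaluating at the base point $e\bH$, whose stabilizer is $\bH$, and invoking equivariance under this residual $\bH$-action should give an $\bH$-equivariant $*$-isomorphism of fibers $B(V_1)\cong B(V_2)$, which transported back through $\mathrm{ad}_{g^{-1}}$ is precisely the isomorphism of $\bH_1$-module algebras required. The formal center step is clean, but turning an equivariant bundle isomorphism over $\mathrm{id}_{\bG/\bH}$ into honest fiber data takes care: one must present $M$ concretely as equivariant sections, verify that evaluation at $e\bH$ is compatible with the stabilizer action, and confirm that the fiber isomorphism in turn recovers $\Phi$, so that the correspondence is a genuine equivalence rather than a one-way implication. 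This is the point at which I would lean on the structure theory of induced ergodic actions from \cite{w1,w2}.
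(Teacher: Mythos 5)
Your proposal is correct and follows essentially the same route as the paper's proof: read off the conjugacy class of $\bH$ from the center $Z(M_i)\cong L^\infty(\bG/\bH_i)$, reduce via the resulting element $g$ to the case $\bH_1=\bH_2$, and recover the $\bH$-module algebra $B(V_i)$ by evaluating the associated bundle at the base point. The two points you flag as delicate (upgrading the equivariant isomorphism of $L^\infty(\bG/\bH_i)$ to an actual homeomorphism of homogeneous spaces, and making the fiber evaluation rigorous) are both handled in the paper by passing to the $C^*$-algebras of norm-continuous elements for the $\bG$-action, which identifies $C(\bG/\bH_i)$ inside the centers and realizes the norm-continuous part of $M_i$ as continuous sections of the associated matrix-algebra bundle over $\bG/\bH$.
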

\begin{proof}
  The sufficiency of the condition is clear: if an element $g$
  satisfying the two conditions exists, then the action of $g$
  implements an isomorphism
  \begin{equation}\label{eq:iso12}
    M_1=\ind_{\bH_1}^{\bG}B(V_1) \cong \ind_{\bH_2}^{\bG}B(V_2)=M_2. 
  \end{equation}
  Conversely, suppose we have an isomorphism \Cref{eq:iso12}. First,
  according to \cite[Theorem 7]{w1}, $L^{\infty}(\bG/\bH_i)$ are the
  centers of the von Neumann algebras $M_i$ respectively, and are
  hence $\bG$-equivariantly isomorphic.

  The algebras $C(\bG/\bH_i)$ can be extracted as the algebras of
  norm-continuous elements with respect to the $\bG$-actions on $M_i$,
  and are hence once more $\bG$-equivariantly isomorphic. This
  translates to a $\bG$-space homeomorphism $\bG/\bH_1\to
  \bG/\bH_2$. If such a homeomorphism sends the class of $1$ in
  $\bG/\bH_1$ to the class of $g\in \bG$ in $\bG/\bH_2$ then the
  isotropy group $\bH_1$ of the former must coincide with the isotropy
  group $g\bH_2 g^{-1}$ of the latter.

  Upon applying $g$, we may now assume that $\bH_i$ coincide (and
  hence drop the subscripts $i$ from $\bH$). The hypothesis is now
  that
  \begin{equation}\label{eq:bdls}
    \ind_{\bH}^{\bG}B(V_1)\cong \ind_{\bH}^{\bG}B(V_2)
  \end{equation}
  via an isomorphism that identifies the centers $L^{\infty}(\bG/\bH)$
  of the two respective sides. The $C^*$-algebras of norm continuity
  on the two sides of \Cref{eq:bdls} are the algebras of continuous
  sections of the bundles over $\bG/\bH$ associated to the actions of
  $\bH$ on $B(V_i)$.

  The desired conclusion that $B(V_i)$ are isomorphic as $\bH$-module
  algebras now follows by evaluating sections of said bundles at the
  class of $1\in \bG$ in $\bG/\bH$.
\end{proof}

\section{Classification results for the compact quantum group $O_{-1}(2)$}\label{se.$O_{-1}(2)$}
In this section we first describe the ergodic actions of $O_{-1}(2)$
and we apply the results of the previous section to obtain the list of
embeddable ergodic actions.

\subsection{Ergodic actions of $O_{-1}(2)$}
In this subsection, we will give the complete list of ergodic
coactions of $O_{-1}(2)$. Let's recall \cite[Theorem 4.3]{BBC}:
 
\begin{theorem}\label{monoi}
  The category of corepresentations of $C(O^{-1}_{n})$ is tensor
  equivalent to the category of representations of $O_{n}$.
\end{theorem}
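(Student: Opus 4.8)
The plan is to exhibit the CQG algebra of $O^{-1}_n$ as a $2$-cocycle twist of that of $O_n=O(n)$, and then to appeal to the general principle, recalled in \Cref{subse.mon}, that cocycle twisting does not alter the monoidal category of comodules. Writing $\cH:=\cA(O(n))$, the outcome will be a tensor $*$-equivalence between the category of $\cH^\lambda$-comodules (that is, corepresentations of $C(O^{-1}_n)$) and the category of $\cH$-comodules, the latter being exactly the representation category of the classical group $O(n)$. This reduces the theorem to two things: identifying the correct cocycle, and checking that the resulting twist is $O^{-1}_n$.

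First I would build the cocycle, generalizing to arbitrary $n$ the construction indicated for $n=2$ before the statement. The diagonal embedding $\bZ_2^n\hookrightarrow O(n)$ induces a Hopf-algebra surjection $\pi:\cH\to\bC\bZ_2^n$ onto the (self-dual) algebra of the diagonal subgroup. Identifying $\bZ_2^n$ with $\bF_2^n$, the bilinear map
\begin{equation*}
  \sigma(a,b)=(-1)^{\sum_{i<j}a_ib_j}
\end{equation*}
is a group $2$-cocycle whose class is nontrivial in $H^2(\bZ_2^n,\bC)$ (for $n=2$ it is the generator of $H^2(\bZ_2^2,\bC)\cong\bZ/2$). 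Precomposing with $\pi\otimes\pi$ produces $\lambda:=\sigma\circ(\pi\otimes\pi):\cH\otimes\cH\to\bC$; since $\lambda$ is the inflation of a genuine group cocycle it is convolution-invertible and satisfies the $2$-cocycle identity on all of $\cH$, even though $\cH$ itself is not cocommutative.

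The computational heart of the proof is the identification $\cH^\lambda\cong\cA(O^{-1}_n)$ as Hopf $*$-algebras. Here I would expand the deformed product
\begin{equation*}
  y_{jk}\bullet y_{\ell m}=\lambda\bigl((y_{jk})_1,(y_{\ell m})_1\bigr)\,(y_{jk})_2(y_{\ell m})_2\,\lambda^{-1}\bigl((y_{jk})_3,(y_{\ell m})_3\bigr)
\end{equation*}
in the Sweedler notation of \Cref{subse.mon}. Because $\lambda$ factors through the abelian quotient, on which the diagonal generators become signed coordinate characters, the evaluations of $\lambda$ and $\lambda^{-1}$ reduce to explicit signs depending only on the index pattern of $(j,k)$ and $(\ell,m)$, and each deformed product equals the undeformed one times $\pm 1$. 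I expect this to reproduce precisely relations (1)--(3) of \Cref{def-O2}: generators in a common row or column anticommute, generators with all indices distinct commute, and the orthogonality relations survive untouched. Confirming that this specific cocycle yields exactly that sign pattern, rather than a cohomologous variant with different signs, is the main obstacle and is where the bookkeeping must be carried out with care.

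Granting the isomorphism $\cH^\lambda\cong\cA(O^{-1}_n)$, the proof closes by invoking \cite[$\S$3.3]{bch-coc}: the functor $\lambda\triangleright$ that is the identity on underlying comodules and twists only the monoidal constraint by $\lambda$ is a monoidal equivalence between $\cH$-comodules and $\cH^\lambda$-comodules, manifestly compatible with the $*$-structures. Reading its two sides as $\Rep(O(n))$ and as the corepresentation category of $C(O^{-1}_n)$ respectively yields the asserted tensor equivalence.
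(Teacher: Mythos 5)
The paper does not prove this statement---it is recalled verbatim from \cite[Theorem 4.3]{BBC}---but your argument is correct and follows essentially the same route that the paper itself sketches for $n=2$ in the Example of \Cref{subse.mon} (and that the cited source uses): inflate a cohomologically nontrivial $2$-cocycle from the diagonal subgroup $\bZ_2^n$ and identify the resulting twist with $O_n^{-1}$. The sign bookkeeping you defer does close without incident: since $\pi(y_{jk})=\delta_{jk}u_j$ for grouplike $u_j$, the deformed product collapses to $y_{jk}\bullet y_{\ell m}=\sigma(e_j,e_\ell)\,\sigma(e_k,e_m)^{-1}\,y_{jk}y_{\ell m}$, and the antisymmetry $\sigma(e_i,e_{i'})\sigma(e_{i'},e_i)=-1$ for $i\ne i'$ reproduces exactly relations (1)--(3) of \Cref{def-O2}.
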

By \Cref{monoi} the compact quantum groups $O_{-1}(2)$ and $O(2)$ are
monoidally equivalents and by \Cref{ergo_corres} their respective
ergodic actions of $O(2)$ are in bijective correspondence. It thus
suffices to classify the ergodic actions of $O(2)$.

In the sequel we will identify $O(2) \cong T\rtimes C_2$, with
$T = \bS^1$ being the circle group, and with the cyclic group
$C_2 = \{1,\sigma\}$ acting on $T$ by $\sigma(z) = \bar{z}$. As a first observation, we have

\begin{theorem}\label{th.list}
  The compact group $O(2)$ is ergodically rigid in the sense of
  \Cref{def.rig}.
\end{theorem}
\begin{proof}
  Immediate from the expression of $O(2)$ as an extension
  $T\rtimes C_2$ together with \Cref{le.ab-rig,pr.ext-rig}.
\end{proof}

We now describe the ergodic actions more explicitly, via
\Cref{th.list} and the representation theory of the closed subgroups
of $O(2)$. These fall into two classes:
\begin{itemize}
\item the closed subgroups $C_k\le T$, either cyclic of order $k$ or
  equal to $T$ for $k=\infty$;
\item the dihedral groups $D_k=C_k\rtimes C_2$, where again we set
  $D_k=O(2)$ for $k=\infty$.
\end{itemize}

All irreducible representations of $C_k$ give rise through the
procedure described above, by induction, to the same ergodic action
$\alpha^{(k)}$ of $O(2)$ on
$L^{\infty}(O(2)/C_k) = L^{\infty}(T/C_k) \oplus L^{\infty}(T/C_k)$,
namely
\[\alpha_z(f,g) = (f_z,g_z),\quad \alpha_{\sigma}(f,g) = (g,f),\]
where $f_z$ denotes the $z$-translate of $f$.

As for the $D_k$, we have the action $\alpha= \beta^{(k)}_0$ on
$L^{\infty}(O(2)/D_k)$ coming from the characters of $D_k$, as well as
those induced from $D_k$ from the actions of the latter on $M_2(\bC)$
given by
\[\alpha_z\begin{pmatrix} a & b \\ c & d \end{pmatrix}
  = \begin{pmatrix} a & z^lb \\ z^{-l}c & d\end{pmatrix},\quad
  \alpha_{\sigma}\begin{pmatrix} a & b \\ c & d \end{pmatrix}
  = \begin{pmatrix} d & c \\ b & a\end{pmatrix}\] for positive
integers $0<l<k$. We denote these $O(2)$-actions by
$\beta^{(k)}_{l/2}$ respectively (with $k=\infty$ corresponding to the
finite-dimensional ergodic actions of $D_\infty=O(2)$ itself).

All in all, we obtain
\begin{proposition}\label{pr.o2-class}
  The full list of mutually non-equivalent ergodic actions of $O(2)$
  is
  \begin{equation*}
    \left\{\beta^{(k)}_{l/2},\alpha^{(k')}\mid k,k'\in \bZ_{\ge
        0}\cup\{\infty\}, 0\leq l\le \left\lfloor \frac k2\right \rfloor\right\}.
  \end{equation*}
\end{proposition}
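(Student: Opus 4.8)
The plan is to combine the ergodic rigidity of $O(2)$ recorded in \Cref{th.list} with the explicit coincidence criterion of \Cref{le.upto}. By \Cref{th.list} and \Cref{def.rig}, every ergodic action of $O(2)$ is isomorphic to $\ind_{\bH}^{O(2)}B(V)$ for some closed subgroup $\bH\le O(2)$ and some irreducible projective representation $V$ of $\bH$. Since the closed subgroups of $O(2)=T\rtimes C_2$ are exactly the cyclic groups $C_k\le T$ and the dihedral groups $D_k=C_k\rtimes C_2$ (with $k=\infty$ giving $T$ and $O(2)$ respectively), the classification splits into two tasks: enumerate, for each such $\bH$, the irreducible projective $V$ up to the equivalence preserving $B(V)$ as an $\bH$-module algebra, and then decide via \Cref{le.upto} when the induced actions coincide.

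For the abelian subgroups $C_k$ I would first note that cyclic groups and the circle carry only trivial multipliers (being of rank one, they admit no nondegenerate alternating bicharacter), so every irreducible projective representation is one-dimensional. Hence $B(V)=\bC$ with trivial conjugation action, independently of the chosen character, and $\ind_{C_k}^{O(2)}B(V)=L^{\infty}(O(2)/C_k)$ is a single action $\alpha^{(k)}$ for each $k$, unwinding to the two-component description $L^{\infty}(T/C_k)\oplus L^{\infty}(T/C_k)$ given before the statement. For the dihedral subgroups $D_k$ the irreducible projective $V$ are either one-dimensional characters, yielding $B(V)=\bC$ and the induced action $\beta^{(k)}_0=L^{\infty}(O(2)/D_k)$, or two-dimensional. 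In the latter case the conjugation action on $B(V)=M_2(\bC)$ depends only on the image of $V$ in $PU(V)$, and a short computation identifies it with the weight-$l$ module algebra displayed before the statement, where $V$ has $C_k$-weights $\pm l/2$. I would stress that passing directly to the adjoint action on $M_2(\bC)$ lets one treat even and odd $l$ (that is, genuine and properly projective representations of $D_k$) simultaneously, thereby sidestepping the dependence of $H^2(D_k,\mathbb{T})$ on the parity of $k$, and producing the family $\beta^{(k)}_{l/2}$ for $0<l<k$.

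It then remains to pin down the coincidences. The flip $w=\begin{pmatrix}0&1\\1&0\end{pmatrix}$ induces an algebra automorphism $\mathrm{ad}_{w}$ of $M_2(\bC)$ intertwining the weight-$l$ and weight-$(k-l)$ actions of $D_k$, whence $\beta^{(k)}_{l/2}\cong\beta^{(k)}_{(k-l)/2}$ and the parameter may be restricted to $0\le l\le\lfloor k/2\rfloor$. Conversely I would apply \Cref{le.upto}: since every $D_{k'}$ contains orientation-reversing elements while $C_k\le T$ does not, no $C_k$ is $O(2)$-conjugate to any $D_{k'}$, so the $\alpha$- and $\beta$-families are disjoint; and subgroups of the same type with distinct indices are non-conjugate, being distinguished by their order (respectively by their intersection with $T$). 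The only possible identifications are thus among the $\beta^{(k)}_{l/2}$ for fixed $k$, and these are governed by the normalizer $N_{O(2)}(D_k)=D_{2k}$; I would check that conjugation by its extra rotation $r_{\pi/k}$ acts on $M_2(\bC)$ by an inner automorphism and hence fixes the $C_k$-weight multiset $\{l,-l\}$, so that $l\in\{0,\dots,\lfloor k/2\rfloor\}$ is a complete invariant.

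The main obstacle is precisely this last step: marshalling \Cref{le.upto} together with the structure of $N_{O(2)}(D_k)$ to rule out all spurious isomorphisms and to reproduce exactly the range $0\le l\le\lfloor k/2\rfloor$, since the preceding reductions are comparatively formal once ergodic rigidity is in hand. A secondary point requiring care is the uniform treatment of the properly projective two-dimensional representations, where one must verify that the weight-$l$ adjoint action on $M_2(\bC)$ is genuinely realized as $B(V)$ for an irreducible projective representation of $D_k$ for every $0<l<k$ regardless of the parities of $k$ and $l$.
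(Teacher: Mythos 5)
Your proposal is correct and follows the same route as the paper: completeness of the list via the ergodic rigidity of $O(2)$ (\Cref{th.list}) and mutual non-isomorphism via \Cref{le.upto}. The paper's proof is essentially a two-line citation of those results, so your write-up simply supplies the details (triviality of multipliers on $C_k$ and $T$, the identification $\beta^{(k)}_{l/2}\cong\beta^{(k)}_{(k-l)/2}$, and the normalizer computation for $D_k$) that the authors leave implicit in the discussion preceding the statement.
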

\begin{proof}
  The fact that this list contains all (isomorphism classes of)
  ergodic actions follows from \Cref{th.list}, while the claim about
  their being mutually non-isomorphic is a consequence of
  \Cref{le.upto}.
\end{proof}

\subsection{Embeddable ergodic actions of $O_{-1}(2)$} 
\label{subse.o-1-emb}

In this subsection we determine the embeddable ergodic actions on
$O_{-1}(2)$, based on those of $O(2)$ classified above in
\Cref{pr.o2-class}. The plan for achieving this is as follows.

First, note that by definition an embeddable ergodic action is by
definition a comodule $*$-algebra of the CQG algebra $\cA_{-1}$
associated to $O_{-1}(2)$ which embeds into $\cA_{-1}$ as such
(i.e. by an embedding that preserves all of the structure: comodule,
algebra, etc.).
 
Since the twisting equivalence $\lambda\triangleright$ that implements
\Cref{ergo_corres} also implements an equivalence between the
categories of coideal $*$-algebras over $\cA_{-1}$ and the untwisted
version $\cA$ (algebra of representative functions on the classical
group $O(2)$), it will be sufficient to identify the ergodic
$O(2)$-action $\cB$ in the list of \Cref{pr.o2-class} for which
\begin{equation*} \lambda\triangleright \cB \cong \cA_{-1}
\end{equation*} as $\cA_{-1}$ comodule $*$-algebras, and to then also
identify the members of that list that embed into $\cB$.
 
We will see that there is only one candidate for $\cB$ (namely
$\beta^{(2)}_{1/2}$) using the Peter-Weyl theorem to determine the
representation type of the ergodic actions identified in
\Cref{pr.o2-class} (where by representation type we mean the
multiplicities of the various irreducible
$O(2)$-representations). Indeed, this is the substance of the
following result.

\begin{proposition}\label{pr.just-two} The only comodule algebras
among those in \Cref{pr.o2-class} that are isomorphic to $\cA$ as
$O(2)$-representations are $\alpha^{(1)}\cong \cA$ itself and
$\beta^{(2)}_{1/2}$.
\end{proposition}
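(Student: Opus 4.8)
The plan is to distinguish the comodule algebras in \Cref{pr.o2-class} by their \emph{representation type} as $O(2)$-modules and compare with that of $\cA=\mathcal O(O(2))$. By Peter--Weyl, $\cA\cong\bigoplus_\pi(\dim\pi)\,\pi$ over the irreducibles of $O(2)$, which are: the trivial character $\mathbf 1$, the determinant character $\det$ (both one-dimensional, trivial on $T$, with $\det(\sigma)=-1$), and for each $m\ge 1$ a two-dimensional $\rho_m$ with $\rho_m|_T=\chi_m\oplus\chi_{-m}$ and $\sigma$ interchanging the two weight lines, where $\chi_j\colon z\mapsto z^j$. In particular $\res_T\cA=\bigoplus_{j\in\bZ}2\,\chi_j$, i.e.\ \emph{every} $T$-character occurs with multiplicity exactly $2$. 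This $T$-level invariant is what I will actually match, since it is easy to read off and already almost completely pins down the answer.

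Second, I reduce every computation to $C_k$-weights. Each action in \Cref{pr.o2-class} is induced from $C_k$ or $D_k$, so as an $O(2)$-module it is $\ind_{C_k}^{O(2)}\bC$, $\ind_{D_k}^{O(2)}\bC$, or $\ind_{D_k}^{O(2)}M_2(\bC)$ respectively. For $\alpha^{(k)}$ I use the explicit underlying space: as a $T$-module $L^\infty(T/C_k)=\bigoplus_{k\mid j}\chi_j$, and $\alpha^{(k)}$ is two such copies, so $\res_T\alpha^{(k)}=\bigoplus_{k\mid j}2\,\chi_j$; this equals $\bigoplus_j 2\,\chi_j$ only for $k=1$, recovering $\alpha^{(1)}\cong\cA$ (and $k=\infty$ is finite-dimensional, hence excluded). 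For the $D_k$-induced actions I invoke Mackey: since $T\trianglelefteq O(2)$ with $D_k\cap T=C_k$ and $D_kT=O(2)$ (a single double coset), $\res_T\ind_{D_k}^{O(2)}W=\ind_{C_k}^{T}(\res_{C_k}W)$, and $\ind_{C_k}^T(\chi_j|_{C_k})=\bigoplus_{j'\equiv j\,(k)}\chi_{j'}$. The fibre $M_2(\bC)$ carries the $C_k$-weights $0,0,l,-l$ (read off the formula for $\alpha_z$), so $\res_T\beta^{(k)}_{l/2}$ has multiplicity $2$ on the residue $0$ and multiplicity $1$ on each of $\pm l \pmod k$; for $\beta^{(k)}_0$ the fibre is $\bC$, giving multiplicity $1$ on residue $0$, which never matches.

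The crux is the behaviour of the residues $\pm l\pmod k$. When $2l\not\equiv 0\pmod k$ the residues $0,l,-l$ are distinct, so some $\chi_j$ occur with multiplicity $1$ and most with multiplicity $0$, ruling out $\cong\cA$. The sole exception is the degenerate case $l=k/2$ ($k$ even), where $l\equiv -l$ and the two multiplicity-$1$ contributions collapse onto the single residue $k/2$, producing multiplicity $2$ there too; then $\res_T\beta^{(k)}_{k/2}$ is supported with multiplicity $2$ exactly on the residues $0$ and $k/2\pmod k$, and this covers all of $\bZ$ iff $k=2$. Hence $\beta^{(2)}_{1/2}$ is the only candidate with a two-dimensional fibre. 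To confirm it genuinely matches (equality of $T$-weights being only necessary), I note that at $k=2,\,l=1$ the module $M_2(\bC)$ is the regular representation of $D_2=\bZ_2^2$, so $\beta^{(2)}_{1/2}=\ind_{D_2}^{O(2)}\bC[D_2]\cong\ind_{\{1\}}^{O(2)}\bC\cong\cA$ as $O(2)$-modules. Together with $\alpha^{(1)}\cong\cA$ this leaves exactly the two claimed actions.

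The main obstacle I anticipate is precisely isolating and handling the degeneration at $l=k/2$: a uniform ``generic $l$'' weight count would wrongly exclude every action with a two-dimensional fibre, and one must observe that it is exactly the collapse $l\equiv -l$ \emph{together with} $k=2$ that fills in all $T$-weights and yields the regular representation. The only remaining care is that matching $T$-weight multiplicities is necessary but not sufficient, so for the two survivors one pins down the full $O(2)$-module structure directly---done for $\beta^{(2)}_{1/2}$ by the regular-representation identification above, and already recorded for $\alpha^{(1)}$.
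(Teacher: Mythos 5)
Your argument is correct, and it takes a genuinely different route from the paper's. The paper works one irreducible $O(2)$-representation at a time, using Frobenius reciprocity $\mathrm{hom}_{O(2)}(V,\ind_H^{O(2)}W)\cong\mathrm{hom}_H(V,W)$ with carefully chosen test objects $V$ ($V_1$ for the $\alpha^{(k)}$ and the $\beta^{(k)}_{l/2}$ with $l>0$, the determinant character for the $\beta^{(k)}_0$), splitting the $\beta$-case into three subcases according to $l$ and $k$; in the surviving case it verifies that \emph{every} irreducible occurs with the Peter--Weyl multiplicity, which settles the isomorphism outright since the category is semisimple. You instead compute a single coarser invariant --- the $T$-weight multiplicities --- uniformly for all list members via Mackey's restriction-of-induction formula ($T$ normal, one double coset), which collapses the case analysis to the arithmetic of the residues $0,\pm l\bmod k$ and makes the degeneration at $l=k/2$, $k=2$ transparent. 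The price is that your invariant is only necessary, so you correctly supply the extra step identifying $\beta^{(2)}_{1/2}$ with the regular representation via $M_2(\bC)\cong\bC[D_2]$ and induction in stages; this is arguably cleaner than the paper's multiplicity check in its case (c), and in fact gives slightly more (an explicit $O(2)$-module isomorphism with $\cA$ rather than just equality of multiplicities). Both proofs are complete; yours trades the paper's tailored choice of test representations for a uniform weight count plus one structural observation at the end.
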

\begin{proof} The $(\infty)$-superscript $O(2)$-representations are
finite-dimensional, so we can discount them for the purposes of this
proposition.
 
For the other members of the list, we will use the Frobenius
reciprocity formula
\begin{equation}
  \label{eq:frob} \mathrm{hom}_{O(2)}(V,\ind_H^{O(2)}W) \cong
\mathrm{hom}_H(V,W)
\end{equation} for $V\in \Rep_{O(2)}$ and $W\in \Rep_H$ in order to
compute the multiplicities of various irreducible
$O(2)$-representations.

For each $k\ge 1$ we have a $2$-dimensional $O(2)$-representation
$V_k$ whose restriction to $T$, upon identifying the Pontryagin dual
\begin{equation*} 
  \widehat{T} \cong \bZ, 
\end{equation*} 
splits as $k\oplus (-k)$.  
 
Now, for $k\ge 2$, $\alpha^{(k)}$ is induced from the non-trivial
cyclic group $C_k\subset T$. Taking $H=C_k$, $W$ to be trivial, and
$V=V_1$ in \Cref{eq:frob}, the right hand side vanishes and hence so
must the left hand side. This means that $V_1$ is not a summand of
$\alpha^{(k)}$, $k\ge 2$, and hence these list members can also be
dropped as candidates for an isomorphism to $\cA$ as $\cA$-comodules.

Next we look at the representations $\beta_0^{(k)}$ for all $k\ge 1$
induced from the trivial representation of the order-$2k$ dihedral
groups $D_k\subset O(2)$. In these cases, \Cref{eq:frob} with $H=D_k$,
$W$ trivial and $V$ being the non-trivial character of $O(2)$
annihilates the right hand side, and hence the left hand side too. In
conclusion, the non-trivial character of $O(2)$ does not appear in
$\beta_0^{(k)}$; this disqualifies these representations.
 
Finally, we consider $\beta_{\ell /2}^{(k)}$ for $\ell>0$ and
$k\ge 2$. Here, we apply \Cref{eq:frob} with $H=D_k$, $W$ the
representation of $D_k$ on $M_2$ described in the discussion preceding
\Cref{pr.o2-class}, and $V=V_1$. There are now a few possibilities:

{\bf (a)} If $\ell>1$ then the right hand side of \Cref{eq:frob} is
zero, so these cases can be discarded;

{\bf (b)} If $\ell=1$ and $k\ge 3$ then the right hand side of
\Cref{eq:frob} is one-dimensional, because the restriction of $V_1$ to
$D_k$ is irreducible. In conclusion $V_1$ appears in
$\beta_{\ell/2}^{(k)}$ with multiplicity one, but it appears in $\cA$
with multiplicity two (by Peter-Weyl, since it is a two-dimensional
irreducible representation). Once more, these cases do not qualify for
the purposes of the proposition;

{\bf (c)} Finally, $\ell=1$ and $k=2$ is left, in which case one
easily checks that the multiplicities match as expected. Indeed, $D_k$
is then the Klein group $\bZ_2^2$, and its $4$-dimensional
representation $W$ that is induced up to $O(2)$ to produce
$\beta^{(2)}_{1/2}$ breaks up as a sum of all of its characters.

It follows from the previous paragraph that if the irreducible
$O(2)$-representation $V$ is one-dimensional then the right hand side
of \Cref{eq:frob} is also one-dimensional, whereas if $V$ is
two-dimensional then its restriction to $D_2$ breaks up as a sum of
two distinct characters, and hence the right hand side of
\Cref{eq:frob} is two-dimensional.

This finishes the proof of the proposition. 
\end{proof}

\begin{remark}
  In the sequel, we will make repeated and implicit use of the fact
  that in the Frobenius reciprocity formula \Cref{eq:frob}, when $V$
  and $W$ are algebras in the respective categories of
  representations, $\ind^{O(2)}_HW$ is again an algebra in
  $\Rep_{O(2)}$.

  Moreover, \Cref{eq:frob} identifies the subspaces of algebra
  morphisms (i.e. those morphisms that are multiplicative in addition
  to being $O(2)$ and $H$-module maps).
\end{remark}

We can now record the consequence alluded to above.

\begin{corollary}\label{cor.cls}
  The twisting equivalence $\lambda\triangleright$ induces a bijection
  between
  \begin{equation*}
    \{\alpha^{(k)},\ \beta^{(k)}_{l/2}\ |\ k=\infty \text{ or even },\ l=0 \text{ or odd }\}
  \end{equation*}
  from \Cref{pr.o2-class} and the isomorphism classes of embeddable
  ergodic actions of $O_{-1}(2)$.
\end{corollary}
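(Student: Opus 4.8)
The plan is to leverage the twisting equivalence $\lambda\triangleright$ of \Cref{ergo_corres} to transport the entire problem to the classical group $O(2)$, whose ergodic actions are listed in \Cref{pr.o2-class}. Since $\lambda\triangleright$ is an equivalence of the categories of comodule $*$-algebras, it is bijective on isomorphism classes of ergodic actions and carries faithful comodule-$*$-algebra morphisms to faithful ones; hence an ergodic $O(2)$-action $\cC$ has embeddable image $\lambda\triangleright\cC$ precisely when $\lambda\triangleright\cC$ embeds into $\cA_{-1}$. The key reduction is therefore to realize $\cA_{-1}$ itself as $\lambda\triangleright\cB$ for a single $\cB$ from the list, for then the embedding $\lambda\triangleright\cC\hookrightarrow\cA_{-1}=\lambda\triangleright\cB$ is equivalent to $\cC\hookrightarrow\cB$ already at the level of $O(2)$. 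Granting this, the desired bijection follows formally: $\lambda\triangleright$ restricts to a bijection between $\{\cC\mid \cC\hookrightarrow\cB\}$ (with $\cC$ ranging over \Cref{pr.o2-class}) and the isomorphism classes of embeddable ergodic actions of $O_{-1}(2)$, so it remains only to (i) pin down $\cB$ and (ii) enumerate the sub-comodule-$*$-algebras of $\cB$ among the list.

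For (i), I would argue that $\cB=\beta^{(2)}_{1/2}$. Since twisting leaves the underlying $O(2)$-comodule unchanged and $\cA_{-1}$ has the same representation type as $\cA$, the action $\cB$ must be isomorphic to $\cA$ as an $O(2)$-representation; by \Cref{pr.just-two} the only candidates are $\alpha^{(1)}\cong\cA$ and $\beta^{(2)}_{1/2}$. To eliminate $\alpha^{(1)}$ and confirm $\beta^{(2)}_{1/2}$, I would compute $\lambda\triangleright\beta^{(2)}_{1/2}$ directly: writing $\beta^{(2)}_{1/2}=\ind_{\bZ_2^2}^{O(2)}M_2(\bC)$, where $\bZ_2^2=D_2$ acts on $M_2(\bC)$ by conjugation through the Pauli matrices (the unique irreducible projective representation attached to the nontrivial class in $H^2(\bZ_2^2,\bC)$), one sees that twisting by $\lambda$ rigidifies this projective representation and converts the matrix units and homogeneous-space coordinates into self-adjoint generators satisfying exactly the orthogonality and anticommutation relations of \Cref{def-O2}. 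The appearance of the signs $-1$ in those relations is precisely the shadow of the nontrivial cocycle, and this identifies $\lambda\triangleright\beta^{(2)}_{1/2}$ with $\cA_{-1}$.

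For (ii), I would use the adjunction between restriction and induction together with the refinement of Frobenius reciprocity recorded in the Remark following \Cref{pr.just-two}, whereby $O(2)$-comodule-$*$-algebra maps $\cC\to\ind_{\bZ_2^2}^{O(2)}M_2(\bC)$ correspond to $\bZ_2^2$-equivariant $*$-algebra maps $\res_{\bZ_2^2}\cC\to M_2(\bC)$. Running this through the list of \Cref{pr.o2-class}, an embedding of $\cC$ amounts to a sufficiently nondegenerate such map into $M_2(\bC)$, and the constraints cutting out the stated family emerge from the representation theory of $\bZ_2^2$ on $M_2(\bC)$: the requirement that $k$ be even or $\infty$ reflects whether the relevant $C_k$ (resp. $D_k$) contains the distinguished $C_2\subset T$ sitting inside $D_2$, while the dichotomy $l=0$ or $l$ odd records whether the matricial part of $\beta^{(k)}_{l/2}$ aligns with the projective (Pauli) structure on $M_2(\bC)$ or only with its commutative center $\bC$.

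The main obstacle I anticipate is the faithfulness bookkeeping in step (ii): the adjunction translates $O(2)$-equivariant algebra maps into $\bZ_2^2$-equivariant ones cleanly, but deciding which of these assemble into an \emph{injective} morphism on all of $\cC$ — as opposed to merely a nonzero map to $M_2(\bC)$ — requires a careful multiplicity and dimension count against $\res_{\bZ_2^2}\cC$, and it is exactly this count that produces the parity conditions on $k$ and $l$. Verifying the generator-and-relation identification in step (i) is routine by comparison, if tedious, so I would keep it to a direct check of \Cref{def-O2}.
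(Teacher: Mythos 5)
Your overall architecture is the same as the paper's: transport everything through $\lambda\triangleright$, identify the comodule algebra $\cB$ with $\lambda\triangleright\cB\cong\cA_{-1}$ as $\beta^{(2)}_{1/2}$ using \Cref{pr.just-two}, and then decide which members of the list in \Cref{pr.o2-class} embed into $\beta^{(2)}_{1/2}$ via Frobenius reciprocity. (For the identification of $\cB$ the paper avoids your proposed generators-and-relations computation: since $\cA_{-1}$ is a two-sided twist of $\cA$ while $\lambda\triangleright$ only untwists on one side, $\cB$ is a one-sided twist of $\cA$ and hence non-abelian, which rules out $\alpha^{(1)}$ at once; your direct check of \Cref{def-O2} would presumably also work but is left undone.)

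The genuine gap is in your step (ii), which is where essentially all of the work in the paper's proof lives. You correctly reduce the problem to deciding which $D_2$-equivariant $*$-algebra maps $\res^{O(2)}_{D_2}\cC\to M_2(\bC)$ assemble into embeddings, and then assert that the parity conditions on $k$ and $l$ "emerge from the representation theory of $\bZ_2^2$ on $M_2(\bC)$" via "a careful multiplicity and dimension count" --- but you never perform that count, and the heuristics you offer (whether $C_k$ contains the distinguished $C_2$, whether the matricial part "aligns with the Pauli structure") do not by themselves produce the answer. Concretely, the paper excludes $\alpha^{(k)}$ and $\beta^{(k)}_0$ for odd $k$ by showing that all four characters of $D_2$ admit unitary eigenvectors in the restriction, forcing any algebra map to $W=M_2(\bC)$ to be surjective and contradicting commutativity of the source; it excludes $\beta^{(k)}_{l/2}$ for even $l>0$ (and, by reduction modulo $k$, for odd $k$) using the embedding $\beta^{(\infty)}_{l/2}\subseteq\beta^{(k)}_{l/2}$ together with the fact that a nonzero map of $2\times 2$ matrix algebras is injective while the $D_2$-invariant subspaces have mismatched dimensions; and the positive cases require explicit constructions (the diagonal subalgebra of $W$, induction in stages for even $k$, and $\beta^{(k)}_0\subset\alpha^{(k)}$). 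None of these arguments, nor substitutes for them, appear in your proposal, so as written it establishes the reduction but not the classification itself.
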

\begin{proof}
  The function algebra of $O_{-1}(2)$ can be obtained from that of
  $O(2)$ by twisting the multiplication both on the right and the
  left, by the cocycle $\lambda$ and its convolution inverse
  $\lambda^{-1}$. Since $\lambda\triangleright$ by definition twists by
  $\lambda$ on the right, the $\cA$-comodule algebra $\cB$ from the
  introductory remarks to \Cref{subse.o-1-emb} is a twist of $\cA$ on
  the left and hence cannot be abelian, and yet must have the same
  representation type as $\cA$ as a right $\cA$-comodule. It must thus
  be $\beta^{(2)}_{1/2}$ by \Cref{pr.just-two}.

  In summary, the desired conclusion will follow once we show that the
  ergodic $O(2)$-actions listed in the statement are precisely those
  that embed into $\beta^{(2)}_{1/2}$.

  Throughout the proof, we denote by $W$ the $D_2$-representation on
  $M_2$ that gives rise to $\beta^{(2)}_{1/2}$ by induction to
  $O(2)$. We examine the representations listed in \Cref{pr.o2-class} systematically.

\vspace{.5cm}

{\bf Type-$\alpha$ actions.}

$\alpha^{(\infty)}$ is two-dimensional. Its restriction to $H=D_2$
embeds into $W$ as the diagonal subalgebra of the realization of $W$
as $2\times 2$ matrices, and hence $\alpha^{(\infty)}$ embeds into
$\beta^{(2)}_{1/2}$ by Frobenius reciprocity \Cref{eq:frob}.

As for $\alpha^{(k)}$ for positive integers $k$, consider first the
case when $k$ is odd. If we had an embedding
\begin{equation*}
  \alpha^{(k)} \subseteq \beta^{(2)}_{1/2},
\end{equation*}
then the Frobenius adjunction \Cref{eq:frob} would turn it into a map 
\begin{equation}\label{eq:alg-map}
  \res^{O(2)}_H\alpha^{(k)}\to W
\end{equation}
of algebras in $\Rep_{D_2}$. The condition that $k$ be odd then
ensures that this map is surjective, since in that case all four
characters of $D_2$ admit unitary eigenvectors in the restriction of
$\alpha^{(k)}$. Since however the left hand side of \Cref{eq:alg-map}
is commutative while the right hand side is not, we obtain a
contradiction.

For even $k$ on the other hand, we can embed $\alpha^{(k)}$ into
$\beta^{(2)}_{1/2}$ by inducing in stages. First, embed
\begin{equation*}
  \res^{D_k}_{D_2}\ind_{C_k}^{D_k}\bC \subseteq W
\end{equation*}
as the diagonal subalgebra of the $2\times 2$ matrix realization of
$W$. Frobenius reciprocity then translates this into an embedding
\begin{equation*}
  \ind_{C_k}^{D_k}\bC \subseteq \ind_{D_2}^{D_k} W.
\end{equation*}
Finally, induce this map further to $O(2)$.

\vspace{.5cm}

{\bf Type-$\beta$ actions, $l=0$.}

$\beta^{(\infty)}_0$ is simply the trivial representation and hence is
embeddable into $\beta^{(2)}_{1/2}$. We note also that $\beta^{(k)}_0$
for odd $k$ can be eliminated in exactly the same way we did
$\alpha^{(k)}$ above.

For even $k$ $\beta^{(k)}_0$ is again embeddable into $\beta^{(2)}_{1/2}$ by the case of even $\alpha^{(k)}$, since we have 
\begin{equation*}
  \beta^{(k)}_0 \subset \alpha^{(k)}. 
\end{equation*}

\vspace{.5cm}

{\bf Type-$\beta$ actions, $l>0$.}

Consider the case of $\beta^{(k)}_{l/2}$ (including $k=\infty$) for
even positive $l$. Here we have an embedding
\begin{equation}\label{eq:infk}
  \beta^{(\infty)}_{l/2} \subseteq \beta^{(k)}_{l/2} 
\end{equation}
of algebras in $\Rep_{O(2)}$, and hence an embedding of the right hand
side into $\beta^{(2)}_{1/2}$ would imply the existence of a morphism
of the left hand side into $W$ in the category $\Rep_{D_2}$. This is
impossible: both the left hand side of \Cref{eq:infk} and $W$ are
$2\times 2$ matrix algebras and hence the morphism would have to be
one-to-one, but the evenness of $l$ ensures that when restricted to
$D_2$ the left hand side of \Cref{eq:infk} has a two-dimensional space
of invariants.

When $k$ is positive and odd, then for every $l$ we have an even $l'$ such that 
\begin{equation*}
  l'\equiv l~ (\mathrm{mod}~k).
\end{equation*}
We have an embedding  
\begin{equation*}
  \beta^{(\infty)}_{l'/2} \subseteq \beta^{(k)}_{l/2} 
\end{equation*}
of algebras in $\Rep_{O(2)}$ and we can repeat the argument above to
conclude that $\beta^{(k)}_{l/2}$ is not embeddable into
$\beta^{(2)}_{1/2}$.

For even $k$ (including by abuse the case $k=\infty$ with $D_k=O(2)$)
and positive odd $l$ the restriction of $\beta^{(\infty)}_{l/2}$ to
$D_k$ embeds into
\begin{equation*}
  \ind_{D_2}^{D_k} W,
\end{equation*}
and hence $\beta^{(k)}_{l/2}$ is embeddable into $\beta^{(2)}_{1/2}$,
as desired.

This concludes the last case and the proof of the result.
\end{proof}


\subsection{Quotients by quantum subgroups}
\label{subse.quot}

In this section we identify those embeddable ergodic actions that
arise as function algebras of quotients by quantum subgroups of
$O_{-1}(2)$. 

We denote by $\cH=\cA_{-1}$ the Hopf algebra underlying
$O_{-1}(2)$. The Hopf $*$-algebra quotients of $\cH$ (i.e. the
function algebras of the quantum subgroups of $)_{-1}(2)$) are
classified in \cite[Theorem 7.1]{bb-4pts}. We briefly recall that
classification here. The non-trivial quotients are as follows. 
\begin{itemize}
\item For each $n\in \bZ_{>0}\cup\{\infty\}$ a quotient isomorphic to
  the group group algebra $\bC D_n$ of the dihedral group of order
  $2n$ (including $n=\infty$);
\item Two families of Hopf algebras $A(n,e)$, $e=\pm 1$,
  $n\in \bZ_{>0}$ of respective orders $4n$.
\end{itemize}

For each quotient Hopf $*$-algebra $\pi:\cH\to \cL$ we have an
associated right coideal $*$-subalgebra
\begin{equation}\label{eq:quot}
  \cA=\cA_\pi = \{x\in \cH\ |\ (\pi\otimes\mathrm{id})\Delta(x) = 1\otimes x\in \cL\otimes \cH\}.
\end{equation}
Our first remark identifies those embeddable actions that can be
realized as such coideal subalgebras for the quotients $\cL=A(n,e)$
from the above classification.

\begin{proposition}\label{pr.ane}
  Let $n\in \bZ_{>0}$. The coideal subalgebras corresponding to
  $\cH\to A(n,e)$, $e=\pm 1$ are isomorphic to the ergodic action
  $\beta^{(2n)}_0$ from \Cref{cor.cls}.
\end{proposition}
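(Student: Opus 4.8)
The plan is to locate $\cA_\pi$ inside the classification of \Cref{cor.cls} by computing its representation type, i.e.\ the multiplicities with which the irreducible corepresentations of $O_{-1}(2)$ occur in $\cA_\pi$ viewed as a right $\cH$-comodule. First I would note that the coinvariant algebra \Cref{eq:quot} is a right coideal $*$-subalgebra carrying the restricted coaction $\Delta|_{\cA_\pi}$, and is therefore an embeddable ergodic action (indeed one of quotient type); by \Cref{cor.cls} it is isomorphic to one of the listed actions, and it remains only to determine which one.

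The reduction to an invariant-theoretic computation is Peter--Weyl: decomposing $\cH=\bigoplus_W C(W)$ over the irreducible corepresentations $W$ and extracting left $A(n,e)$-coinvariants blockwise shows that the multiplicity of $W$ in $\cA_\pi$ equals $\dim W^{A(n,e)}$, the dimension of the space of $A(n,e)$-coinvariants in $W$. Here I use that the corepresentations at issue---the trivial character $\mathbf 1$, the non-trivial character $\chi$, and the two-dimensional $V_m$ for $m\ge 1$---are all self-dual, so the bookkeeping of $W$ against $W^*$ is harmless. The proposition thus reduces to the three assertions $\dim \mathbf 1^{A(n,e)}=1$, $\dim \chi^{A(n,e)}=0$, and $\dim V_m^{A(n,e)}=1$ when $2n\mid m$ and $0$ otherwise.

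The substance of the argument, and the step I expect to be the main obstacle, is this last computation. It requires the explicit presentation of the $4n$-dimensional Hopf algebra $A(n,e)$ and of the surjection $\pi$ from \cite{bb-4pts}, together with an analysis of the restriction functor $\Rep(O_{-1}(2))\to\Rep(A(n,e))$. I would compute the image $\pi(y)$ of the fundamental two-dimensional corepresentation, decompose its tensor powers to locate each $V_m$ inside $\Rep(A(n,e))$, and read off the coinvariants; the sign $e=\pm1$ alters the Hopf structure but, as the desired conclusion demands, must leave these coinvariant dimensions unchanged, so both quotients give the same answer. Conceptually one is verifying that $A(n,e)$ occupies inside $O_{-1}(2)$ the place that the order-$4n$ dihedral group $D_{2n}$ occupies inside $O(2)$, reproducing the values $\dim V_m^{D_{2n}}$ and $\dim \chi^{D_{2n}}=0$.

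Finally I would match this representation type against those of the actions in \Cref{cor.cls}; recall that the monoidal equivalence $\lambda\triangleright$ preserves these multiplicities, so the comparison may be carried out on either side. Vanishing of the $\chi$-multiplicity excludes the type-$\alpha$ actions and the type-$\beta$ actions with odd index $l$, each of which carries the non-trivial character with nonzero multiplicity; among the remaining $\beta^{(k)}_0$ the least positive $m$ for which $V_m$ occurs equals $k$ itself, which forces $k=2n$. As representation type is thereby a complete invariant on the relevant sublist, this identifies $\cA_\pi\cong\beta^{(2n)}_0$ and finishes the proof.
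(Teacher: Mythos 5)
Your proposal is correct and follows essentially the same route as the paper: both arguments identify $\cA_{\pi_{n,e}}$ within the list of \Cref{cor.cls} by its representation type, first using the vanishing of the non-trivial character (which the paper extracts from the fact that the grouplike $d$ has $\pi(d)\neq 1$ for the $A(n,e)$ quotients) to single out the actions $\beta^{(k)}_0$, and then pinning down $k=2n$ by checking, via the explicit description of $A(n,e)$ in \cite{bb-4pts} and Frobenius reciprocity, that $V_m$ occurs precisely when $2n\mid m$. The only cosmetic difference is that you phrase the multiplicity computation through a blockwise Peter--Weyl decomposition, where the paper cites \cite[Lemmas 7.3--7.4]{bb-4pts} directly for the same coinvariant dimensions.
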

\begin{proof}
  It is easy to see from the proofs of \Cref{pr.just-two,cor.cls} that
  $\beta^{(2n)}_{0}$ are the only embeddable coactions among those in
  \Cref{cor.cls} that do not contain the non-trivial one-dimensional
  comodule of $\cH$.

  On the other hand, it follows from \cite[Lemma 7.3 and Theorem
  7.1]{bb-4pts} that the quotients $\pi_{n,e}:\cH\to A(n,e)$ are those
  for which the non-trivial grouplike $d\in \cH$ satisfies
  $\pi(d)\ne 1$; by the previous paragraph, it follows that the
  comodule algebras corresponding to the actions $\beta^{(2n)}_0$ are
  indeed among $\cA_{\pi_{n,e}}$ defined as in \Cref{eq:quot}.

  Now consider the simple two-dimensional $\cH$-comodules $V_k$,
  $k\in \bZ_{>0}$ corresponding to the simple $O(2)$-representations
  denoted by the same symbols in the proof of \Cref{pr.just-two}. It
  follows from \cite[Lemma 7.4]{bb-4pts} (and its proof) that when
  regarded as a comodule over $A(n,e)$, $V_k$ contains the trivial
  representation precisely when $2n$ divides $k$. This, then, is the
  sufficient and necessary condition that ensures that $V_k$ appears
  as a subcomodule of $\cA_{\pi_{n,e}}$.

  The conclusion now follows from the observation that, by Frobenius
  reciprocity, $V_k$ is similarly embeddable into $\beta^{(2n)}_0$ as
  a comodule if and only if $2n|k$.
\end{proof}

It now remains to identify those embeddable ergodic actions that
correspond to the quantum subgroups $\cH\to \bC D_n$ for
$n\in \bZ_{>0}\cup \{\infty\}$. According to \Cref{cor.cls,pr.ane},
these will be among the $\alpha^(k)$ and $\beta^{(k)}_{l/2}$ for even
$k$ (including $k=\infty$) and odd $l$.

\begin{proposition}\label{pr.dih}
  Let $n\in \bZ_{>0}\cup\{\infty\}$. The $\cH$-comodule algebra
  $\alpha^{(2n)}$ is isomorphic to the right coideal subalgebra
  $\cA_{\pi_n}$ of $\cH$ associated to the Hopf quotient
  $\pi_n:\cH\to \bC D_n$.
\end{proposition}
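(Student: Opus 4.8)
The plan is to pin down $\cA_{\pi_n}$ by computing its isotypic decomposition as a right $\cH$-comodule and matching it against that of $\alpha^{(2n)}$, then invoking that the embeddable ergodic actions of $O_{-1}(2)$ have already been listed, and shown to be mutually non-isomorphic, in \Cref{cor.cls}. By \Cref{eq:quot}, $\cA_{\pi_n}$ is the coideal of functions on the quantum homogeneous space $\widehat{D_n}\backslash O_{-1}(2)$, and its Peter-Weyl decomposition writes it, as a right $\cH$-comodule, in the form $\bigoplus_V V^{\oplus m_V}$, with $V$ ranging over the irreducible $\cH$-comodules and $m_V$ the multiplicity of the trivial $\bC D_n$-comodule in the restriction $(\id\otimes\pi_n)\rho_V$ of $V$ (here $\rho_V\colon V\to V\otimes\cH$ is the coaction). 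As $\bC D_n$ is cocommutative, its comodules are $D_n$-graded vector spaces and $m_V$ is just the dimension of the identity-graded component of $V$. The computation therefore reduces to restricting the irreducible $\cH$-comodules --- the trivial comodule $\mathbf{1}$, the non-trivial grouplike $d$, and the two-dimensional $V_k$ from the proof of \Cref{pr.just-two} --- along $\pi_n$, and comparing with the decomposition $\alpha^{(2n)}\cong\mathbf{1}\oplus d\oplus\bigoplus_{2n\mid k}V_k^{\oplus 2}$ that \Cref{eq:frob} yields on the $O(2)$ side.

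For the individual multiplicities, $\mathbf{1}$ contributes $m_{\mathbf{1}}=1$ automatically. For the grouplike $d$ I would argue exactly as in \Cref{pr.ane}: by \cite[Theorem 7.1]{bb-4pts} the quotients onto $A(n,e)$ are precisely those with $\pi(d)\ne 1$, so the complementary family $\pi_n\colon\cH\to\bC D_n$ satisfies $\pi_n(d)=1$, whence $d$ lands in the identity-graded component and $m_d=1$. This already separates $\cA_{\pi_n}$ from the family $\beta^{(2m)}_0$, which the proof of \Cref{pr.ane} identifies as exactly the embeddable actions \emph{not} containing $d$. The real content is the multiplicity $m_{V_k}$: using \cite[Lemma 7.4]{bb-4pts} and its proof to describe the $\bC D_n$-grading on $V_k$, I expect $V_k$ to restrict to two copies of the trivial comodule when $2n\mid k$ and to a comodule with vanishing identity-graded part otherwise, giving $m_{V_k}=2$ precisely when $2n\mid k$ and $m_{V_k}=0$ in the remaining cases.

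Granting these multiplicities, $\cA_{\pi_n}$ and $\alpha^{(2n)}$ have the same representation type, and it remains to upgrade this to an isomorphism of comodule algebras. I would do so by elimination within the candidates already isolated before the statement, namely the $\alpha^{(k)}$ and $\beta^{(k)}_{l/2}$ with $k$ even or $\infty$ and $l$ odd. Each of these contains $d$, but every $\beta^{(k)}_{l/2}$ also contains some $V_j$ with multiplicity exactly $1$ (for instance $j\equiv l\pmod k$ for finite $k$, and $j=l$ for $k=\infty$), whereas $\cA_{\pi_n}$, like $\alpha^{(2n)}$, has all $V_k$-multiplicities in $\{0,2\}$. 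This excludes the $\beta$-type candidates and forces $\cA_{\pi_n}\cong\alpha^{(2n')}$ for some $n'$; the divisibility pattern $\{k : m_{V_k}\ne 0\}=\{k : 2n\mid k\}$ then gives $n'=n$, and since the actions in \Cref{cor.cls} are mutually non-isomorphic, the coincidence of representation type is an actual isomorphism.

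The step I expect to be the main obstacle is the computation of $m_{V_k}$, and specifically the appearance of the factor $2$. In \Cref{pr.ane} it was enough to detect the mere presence of an invariant vector, whereas here one must extract from the explicit form of $\pi_n$ in \cite{bb-4pts} that the trivial $\bC D_n$-comodule occurs in $V_k$ with multiplicity exactly $2$ when $2n\mid k$. This doubling --- reflecting the order $2n$ of $\widehat{D_n}$ against the order $4n$ of $A(n,e)$ --- is exactly what distinguishes the cyclic-dual action $\alpha^{(2n)}$ from the $A(n,e)$-quotient $\beta^{(2n)}_0$, and pinning it down cleanly is the heart of the argument.
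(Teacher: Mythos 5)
Your proposal follows essentially the same route as the paper: both compute the isotypic decomposition of $\cA_{\pi_n}$ from the explicit description of the quotient $\pi_n$ in \cite{bb-4pts} (the paper phrases your computation $m_{V_k}=2$ iff $2n\mid k$ as the statement that the matrix coalgebra $C_k$ is contained in $\cA_{\pi_n}$ exactly when $2n\mid k$ and meets it trivially otherwise), match this against $\alpha^{(2n)}$, and conclude using that the actions of \Cref{cor.cls} are pairwise non-isomorphic already as comodules. One small slip in your elimination step: $\beta^{(2)}_{1/2}$ is comodule-isomorphic to $\cA$ itself, so every $V_j$ occurs in it with multiplicity $2$ rather than $1$; it is nonetheless excluded by your own support argument, since it contains $V_1$ while $\cA_{\pi_n}$ does not, so the conclusion stands.
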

\begin{proof}
  As in the proof of \Cref{pr.ane} above, denote by $V_k$ the simple
  two-dimensional $\cH$-comodules for $k\in \bZ_{>0}$. Similarly, let
  $C_k\subset \cH$ be the corresponding $2\times 2$ matrix coalgebra.

  The explicit description of the matrix coalgebras $C_k$ from
  \cite[discussion preceding Proposition 7.1]{bb-4pts} shows that
  $C_k$ is contained in $\cA_{\pi_n}$ when $2n|k$, and intersects
  $\cA_{\pi_n}$ trivially otherwise.

  The statement is now a consequence of the fact that similarly, the
  multiplicity of $V_k$ in $\alpha^{(2n)}$ is two when $2n|k$ and zero
  otherwise.
\end{proof}

\subsection{Generalized quantum subgroups}
\label{subse.gen-quot}

As seen in \Cref{subse.quot} above, the quantum subgroups of
$O_{-1}(2)$ do not account for all embeddable ergodic actions of the
latter quantum group. We will see here that nevertheless, these
ergodic actions can be recovered through what might be deemed
``subquotient'' quantum groups of $O_{-1}(2)$. To make sense of this,
we need to recall some material from \cite{fr-sk-id}.

First, consider an arbitrary CQG algebra $\cH$. \cite[Theorem 1]{ss17}
establishes a one-to-one correspondence between certain coideal
subalgebras of $\cH$ (which are morally the embeddable actions of the
underlying quantum group of $\cH$) and the {\it idempotent states} on
the latter, i.e. those states $\phi$ satisfying $\phi*\phi=\phi$ for
the convolution product.

An idempotent state is a generalization of a quantum subgroup, since
given such a quantum subgroup $\pi:\cH\to \cB$ the composition
$h_{\cB}\circ \pi$ is idempotent. For this reason, the coideal
subalgebra of $\cH$ defined by
\begin{equation*}
  \mathrm{Im}(\phi\otimes\mathrm{id})\circ\Delta
\end{equation*}
for an idempotent state $\phi$ can be regarded as a natural
generalization of a quotient by a quantum subgroup.

Now suppose $\cH=\bC\Gamma$ is the group algebra of a discrete group
(i.e. the Hopf algebra underlying an {\it abelian} compact quantum
group). As seen in \cite[Theorem 6.2]{fr-sk-id} (for finite groups but
the discussion generalizes), the idempotent states on $\cH$ are simply
the characteristic functions of subgroups of $\Gamma$.

In general, for an arbitrary CQG algebra $\cH$ with a quotient
$\cH\to \bC\Gamma$, the characteristic function on a subgroup of
$\Gamma$ is an idempotent state on $\cH$ and hence corresponds to some
coidalgebra of $\cH$. With this in mind, we introduce the following
term to aid the streamlining of the presentation.

\begin{definition}\label{def.tame}
  Let $\cH$ be a CQG algebra, $\pi:\cH\to \bC\Gamma$ a quotient group
  algebra, and $\Omega\subset\Gamma$ a discrete subgroup. We denote
  \begin{equation*}
    \cA_{\pi,\Omega} = \mathrm{Im}(\phi\otimes\mathrm{id})\circ\Delta,
  \end{equation*}
  where $\phi:\cH\to \bC$ is the characteristic function on
  $\Omega\subset\Gamma$ composed with $\pi$.

  A {\it tame} embeddable ergodic action of the quantum group attached
  to $\cH$ is one that is isomorphic to the coidalgebra
  $\cA_{\pi,\Omega}$ for some $\pi:\cA\to \bC\Gamma$ and some subgroup
  $\Omega\subseteq \Gamma$.
\end{definition}

This notion allows us to draw the conclusion announced above.

\begin{proposition}\label{pr.tame}
  The ergodic actions $\beta^{(k)}_{l/2}$, $l\ne 0$ of $O_{-1}(2)$
  listed in \Cref{cor.cls} are tame in the sense of \Cref{def.tame}.
\end{proposition}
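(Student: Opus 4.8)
These are precisely the embeddable ergodic actions from \Cref{cor.cls} that were \emph{not} accounted for by quantum subgroups in \Cref{pr.ane,pr.dih}, and the plan is to realize each of them as a coidalgebra $\cA_{\pi_n,\Omega}$ of the shape demanded by \Cref{def.tame}, built from the dihedral Hopf quotients $\pi_n:\cH\to\bC D_n$ already used in \Cref{pr.dih}. Write $D_n=\langle r,s\mid r^n=s^2=1,\ srs=r^{-1}\rangle$. For an even period $k$ with $l\neq k/2$ I would take $n=k$ and $\Omega=\langle r^{k/2},\,r^{(l-1)/2}s\rangle\cong\bZ_2\times\bZ_2$; for the boundary period $k=2l$ (so $l=k/2$) I would take $n=l$ and $\Omega=\langle r^{(l-1)/2}s\rangle\cong\bZ_2$; and for $k=\infty$ I would take $n=\infty$ and the same $\bZ_2$. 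In each case the characteristic function of $\Omega$ composed with $\pi_n$ is an idempotent state, so $\cA_{\pi_n,\Omega}$ is an embeddable ergodic action by \cite{ss17,FS}, and the task is to locate it in the list of \Cref{cor.cls}.

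The crux is to compute the $D_n$-grading that $\pi_n$ induces on each simple comodule $V_m$ of $\cH$. Since $\bC D_n$ is pointed, pushing the coaction through $\pi_n$ writes the two-dimensional $V_m$ as a sum of two grouplikes of $D_n$, and the multiplicity of $V_m$ in $\cA_{\pi_n,\Omega}$ is simply the number of those two grouplikes that lie in $\Omega$ — this being the rank of the idempotent $(\chi_\Omega\circ\pi_n\otimes\mathrm{id})\circ\Delta$ on the $V_m$-isotypic block. Using that $\pi_n(d)=1$ for the nontrivial grouplike $d\in\cH$ — the very property that separates the quotients $\bC D_n$ from the $A(n,e)$ in \cite[Theorem 7.1, Lemma 7.3]{bb-4pts}, cf. the proof of \Cref{pr.ane} — and reading off the images of the explicit matrix coalgebras of \cite{bb-4pts} (with the fusion rules of $O(2)$ transported across the monoidal equivalence and multiplicativity of the grading under tensor products as consistency checks), I would establish
\begin{equation*}
  V_{2j}\ \longmapsto\ \{\,r^{j},\,r^{-j}\,\},\qquad V_{2j+1}\ \longmapsto\ \{\,r^{j}s,\,r^{-j-1}s\,\}.
\end{equation*}
Thus even-indexed comodules land on a pair of mutually inverse rotations, while the twist forces odd-indexed ones onto a pair of reflections; the even case is calibrated against \Cref{pr.dih} (both grouplikes equal $1$ exactly when $2n\mid m$), and the base grading $V_1\mapsto\{s,r^{-1}s\}$ together with the decomposition $V_1\otimes V_1\cong V_2\oplus\mathbf 1\oplus(\text{sgn})$ fixes the normalization.

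Granting this, the identification becomes arithmetic modulo $n$. For even $k$ and $l\neq k/2$, the rotation generator $r^{k/2}$ of $\Omega$ captures $V_{2j}$ with multiplicity $2$ exactly when $(k/2)\mid j$, i.e. when $k\mid m$, while the single reflection $r^{(l-1)/2}s$ selects exactly one of the two reflections of $V_{2j+1}$ precisely when $m\equiv\pm l\pmod{k}$ and neither otherwise; together with the one-dimensional comodules sitting at $1\in\Omega$, this reproduces the multiplicities of $\beta^{(k)}_{l/2}$ computed from $M_2$ by Frobenius reciprocity \Cref{eq:frob} over $D_k$. The choice $b=(l-1)/2$ of reflection is exactly what breaks the $m\leftrightarrow-m$ symmetry and lets a \emph{subgroup} $\Omega$ produce odd multiplicities at all. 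The boundary cases are handled the same way: when $l=k/2$ the two reflections of $V_m$ coincide precisely when $n\mid m$, which is what yields multiplicity $2$ at the odd residue $m\equiv l$; and $k=\infty$ is the evident degeneration over $D_\infty$. Since an embeddable ergodic action is determined by its idempotent state, hence by its comodule type within \Cref{cor.cls}, matching all these multiplicities identifies $\cA_{\pi_n,\Omega}$ with $\beta^{(k)}_{l/2}$.

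I expect the grading computation to be the real obstacle, and in particular the reflection-grading of the odd $V_m$: it is the genuinely twisted input, with no classical counterpart — for $n\geq 3$ the commutative algebra $\cA(O(2))$ admits no surjection onto the noncommutative $\bC D_n$ whatsoever — so it has to be extracted from the explicit presentation of $\cH$ and its quotients in \cite{bb-4pts} rather than inferred from $O(2)$.
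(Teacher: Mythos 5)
Your proposal is correct and follows essentially the same route as the paper: realize each $\beta^{(k)}_{l/2}$ as the coidalgebra $\cA_{\pi,\Omega}$ attached to the characteristic function of a subgroup $\Omega$ generated by a single reflection (plus a rotation) inside a dihedral group-algebra quotient of $\cH$, read off the grouplike grading of the simple comodules from the explicit matrix coalgebras of \cite{bb-4pts}, and match multiplicities against the mutually non-isomorphic comodule types in \Cref{cor.cls}. The only difference is that you use the finite quotients $\bC D_k$ (and $\bC D_{k/2}$ at the boundary $l=k/2$) where the paper works once and for all with $\pi:\cH\to \bC D_\infty$ and correspondingly larger $\Omega$; your grading formula $V_{2j}\mapsto\{r^j,r^{-j}\}$, $V_{2j+1}\mapsto\{r^js,r^{-j-1}s\}$ is exactly what the paper's explicit entries $(v_{11}v_{22})^m v_{11}^{\varepsilon}$, $(v_{22}v_{11})^m v_{22}^{\varepsilon}$ yield under $v_{ii}\mapsto\sigma_i$, $v_{ij}\mapsto 0$.
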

\begin{proof}
  Specifically, we will show that all of these are isomorphic to
  coidalgebras $\cA_{\pi,\Omega}$ where $\pi:\cH\to \bC D_\infty$ is
  the surjection onto the group algebra of the infinite dihedral group
  from \cite[Theorem 7.1]{bb-4pts} and $\Omega\subseteq D_\infty$ are
  various subgroups.

  The discussion in \cite[Section 7]{bb-4pts} introduces the matrix
  counits $v_{ij}$ for the comodule $V_1$ of $\cH$, and the quotient
  $\cH\to \bC D_\infty$ sends $v_{ii}$, $i=1,2$ to the two involutions
  $\sigma_i$ generating $D_\infty$ and annihilates $v_{ij}$, $i\ne j$.

  Furthermore, the matrix subcoalgebra $C_k\subset \cH$ associated to
  the simple two-dimensional $\cH$-comodule $V_k$, $k\in \bZ_{>0}$ is
  \begin{equation*}
    \begin{pmatrix}
      (v_{11}v_{22})^mv_{11}^\varepsilon & (v_{12}v_{21})^mv_{12}^\varepsilon
    \\(v_{21}v_{12})^mv_{21}^\varepsilon & (v_{22}v_{11})^mv_{22}^\varepsilon
    \end{pmatrix}
  \end{equation*}
  where $\varepsilon\in \{0,1\}$ and $k=2m+\varepsilon$.

  Now let $k$ be even or $\infty$ and $\ell$ odd, parametrizing the
  actions $\beta^{(k)}_{l/2}$ from \Cref{cor.cls}.

  By simply counting multiplicities of the various $V_t$, the explicit
  description of the matrix coalgebras $C_k$ now makes it an easy
  check that $\beta^{(k)}_{l/2}$ is isomorphic as an $\cH$-comodule to
  $\cA_{\pi,Omega}$, where the subgroup $\Omega$ of $D_\infty$ is the
  semidirect product of the subgroup of index $k$ in
  $\bZ\subset D_\infty$ by the order-two group generated by
  $(g_1g_2)^{\frac {l-1}2}g_1$.

  The conclusion follows from this, since the comodule algebras in
  \Cref{cor.cls} are mutually non-isomorphic as comodules.
\end{proof}

In conjunction with \Cref{pr.ane,pr.dih}, this result accounts for all
of the ergodic actions of $O_{-1}(2)$ as classified in \Cref{cor.cls}.


\section{Counterexamples: dihedral groups}
\label{se.cntrex}

Recall \Cref{qu.mon-erg}, on whether or not cocycle-twisting in some
sense preserves isomorphism classes of embeddable ergodic actions. One
possible precise interpretation would be as follows (in the context of
compact quantum groups $\bG_i$ obtained via cocycle deformation for a
cocycle $\lambda$).

\begin{question}\label{qu.mon-erg-bis}
  Does 
  \begin{equation*}
    \lambda\triangleright:\cE rg(\bG_1) \to \cE rg(\bG_2)
  \end{equation*}
  restrict to an equivalence between subcategories of {\it embeddable}
  ergodic coactions?
\end{question}

We already know that the answer to this version of the question is
negative, by examining the mutual twists $O(2)$ and $O_{-1}(2)$ we
have been studying:

\begin{corollary}\label{cor.nogo1}
  Let $\lambda$ be a cocycle which twists $O(2)$ into
  $O_{-1}(2)$. Then, the answer to \Cref{qu.mon-erg-bis} is negative
  for $\bG_1=O(2)$ and $\bG_2=O_{-1}(2)$.
\end{corollary}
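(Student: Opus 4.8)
The plan is to read off the answer directly from the classification already established, by comparing the embeddable ergodic actions on the two sides. Since $O(2)$ is a genuine (commutative) compact group, its function algebra $\cA$ is commutative, so any coidalgebra $\theta:C(\bX)\hookrightarrow C(O(2))$ forces $C(\bX)$ to be commutative as well. Hence embeddability over $O(2)$ is equivalent to the action being carried on a \emph{commutative} algebra. First I would single out, within the list of \Cref{pr.o2-class}, exactly which actions are commutative: these are precisely the homogeneous-space actions $\alpha^{(k)}$ (on $L^{\infty}(O(2)/C_k)$) and $\beta^{(k)}_0$ (on $L^{\infty}(O(2)/D_k)$), for $k\in\bZ_{\ge 0}\cup\{\infty\}$. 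Being function algebras of the quotients $\bH\backslash O(2)$, these are honest coideal subalgebras of $\cA$, so they are in fact embeddable.

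Next I would dispose of the remaining entries. For $l>0$ the action $\beta^{(k)}_{l/2}$ is induced from the genuinely noncommutative $D_k$-algebra $M_2(\bC)$, so $\ind_{D_k}^{O(2)}M_2(\bC)$ is a bundle of $2\times 2$ matrix algebras and in particular noncommutative; it therefore admits no $*$-embedding into the commutative algebra $C(O(2))$. This pins down the embeddable ergodic actions of $O(2)$ as exactly $\{\alpha^{(k)},\ \beta^{(k)}_0\mid k\in\bZ_{\ge 0}\cup\{\infty\}\}$.

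Finally I would compare this against \Cref{cor.cls}. Because $\lambda\triangleright$ is a bijection on \emph{all} ergodic actions (\Cref{ergo_corres}), the content of \Cref{qu.mon-erg-bis} is whether it carries the embeddable $O(2)$-actions onto the embeddable $O_{-1}(2)$-actions. By \Cref{cor.cls} the latter are exactly the $\lambda\triangleright$-images of $\{\alpha^{(k)},\ \beta^{(k)}_{l/2}\mid k=\infty\text{ or even},\ l=0\text{ or odd}\}$, and this preimage set does not coincide with the set of embeddable $O(2)$-actions found above. The mismatch can be exhibited in either direction: $\alpha^{(1)}$ is embeddable over $O(2)$ (it is the regular coaction $\cA$ itself, tautologically a coidalgebra), yet $k=1$ is odd, so $\lambda\triangleright(\alpha^{(1)})$ fails to be embeddable over $O_{-1}(2)$; dually, $\beta^{(2)}_{1/2}$ is noncommutative and hence not embeddable over $O(2)$, whereas $\lambda\triangleright(\beta^{(2)}_{1/2})\cong\cA_{-1}$ is trivially embeddable over $O_{-1}(2)$. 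Either observation shows that $\lambda\triangleright$ does not restrict to an equivalence between the two subcategories of embeddable ergodic coactions, settling \Cref{qu.mon-erg-bis} in the negative.

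The step I expect to demand the most care is the first one: arguing cleanly that embeddability over the classical group $O(2)$ is equivalent to the underlying algebra being commutative, and then identifying the commutative members of \Cref{pr.o2-class} as precisely the $\alpha^{(k)}$ and $\beta^{(k)}_0$. Everything after that is a direct bookkeeping comparison of two explicitly parametrized lists, for which the parity constraints recorded in \Cref{cor.cls} do all the work.
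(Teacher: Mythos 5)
Your argument is correct and is exactly the comparison the paper has in mind: its proof simply declares the corollary an ``immediate consequence'' of \Cref{cor.cls}, and your witnesses (e.g.\ $\beta^{(2)}_{1/2}$, noncommutative hence not embeddable into the commutative $C(O(2))$, yet twisting to $\cA_{-1}$ itself; or dually $\alpha^{(1)}=\cA$) supply the details being elided. No substantive difference in approach.
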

\begin{proof}
  This is an immediate consequence of \Cref{cor.cls}. 
\end{proof}

The question remains however of whether one can implement a more
sophisticated equivalence between the embeddable ergodic actions of
two mutual twists. To rule this out, we will observe below that there
are examples of mutually cocycle-twisted {\it finite} quantum groups
with different numbers of isomorphism classes of embeddable ergodic
actions.

The groups in question will be discrete versions of $O(2)$, i.e. the
dihedral groups $D_K$ (for even $K$). The contents of this section can
thus be regarded as a ``discretization'' of those of
\Cref{se.$O_{-1}(2)$}. We will mostly omit proofs, as they are almost
verbatim recapitulations of those in the preceding section.

Fix an even positive integer $K$ (though evenness will only be of
relevance to parts of the discussion below).

Then, for the order-$2K$ dihedral group $D_K$, we preserve the
notation $\alpha^{(k)}$ and $\beta^{(k)}_{\frac l2}$ for
representations induced from subgroups $C_k$ and $D_k$ of $K$. Note
that whenever we employ this notation, the condition $k|K$ is
implicit.

The classification of ergodic actions is perfectly analogous to that
in \Cref{pr.o2-class} with a parallel proof, via ergodic rigidity and
an appeal to \Cref{pr.ext-rig,le.ab-rig,le.upto}).

\begin{proposition}\label{pr.d-class}
  The full list of mutually non-equivalent ergodic actions of $D_K$ is
  \begin{equation*}
    \{\beta^{(k)}_{l/2},\alpha^{(k')}\}
  \end{equation*}
  for $k,k' | K$ and $0\leq l\le \left\lfloor \frac k2\right\rfloor$. \qedhere
\end{proposition}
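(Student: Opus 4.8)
The plan is to transcribe the argument of \Cref{pr.o2-class} with the circle $T$ replaced by the finite cyclic group $C_K$. Writing $D_K=\langle r,s\mid r^K=s^2=1,\ srs^{-1}=r^{-1}\rangle$, the rotation subgroup gives a short exact sequence
\[
1\to C_K\to D_K\to C_2\to 1
\]
exhibiting $D_K$ as an extension of the finite group $C_2$ by the finite abelian group $C_K$. Since abelian compact groups are ergodically rigid, \Cref{le.ab-rig,pr.ext-rig} apply verbatim and yield that $D_K$ is ergodically rigid. Hence, by \Cref{def.rig}, every ergodic action of $D_K$ is of the form $\ind_{\bH}^{D_K}B(V)$ for a subgroup $\bH\le D_K$ and an irreducible projective representation $V$ of $\bH$.

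The next step is to run through the subgroups of $D_K$ together with their irreducible projective representations. Up to conjugacy these subgroups are the cyclic $C_k=\langle r^{K/k}\rangle$ and the dihedral $D_k=\langle r^{K/k},s\rangle$ for divisors $k\mid K$, with conjugate choices producing isomorphic induced actions by \Cref{le.upto}. The cyclic groups are abelian with trivial Schur multiplier, so their irreducible projective representations are one-dimensional and $B(V)=\bC$; inducing any character of $C_k$ yields the single action $\alpha^{(k)}$, exactly as in the $O(2)$ computation. For $D_k$, the one-dimensional representations induce to $\beta^{(k)}_0$, while the two-dimensional irreducible projective representations are realized on $M_2(\bC)=B(V)$ by $\pi(r^{K/k})=\mathrm{diag}(\zeta^{l/2},\zeta^{-l/2})$ (for $\zeta$ a primitive $k$-th root of unity) together with an off-diagonal involution implementing the reflection; these are genuine when $l$ is even and honestly projective when $l$ is odd, in accordance with $H^2(D_k,\bC^*)$. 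Inducing $B(V)$ up to $D_K$ then reproduces $\beta^{(k)}_{l/2}$, and so the list in the statement exhausts all ergodic actions.

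Finally, mutual non-equivalence and the precise range $0\le l\le\lfloor k/2\rfloor$ follow from \Cref{le.upto}: inner conjugation by a rotation $r^m$ moves $C_k$ and $D_k$ to conjugate copies without altering the induced action, while conjugation by the reflection $s$ identifies the projective representation indexed by $l$ with the one indexed by $-l\equiv k-l$, folding the parameter into $[0,\lfloor k/2\rfloor]$; the criterion of \Cref{le.upto} then shows no further coincidences arise. I expect the main obstacle to be precisely this last bookkeeping: one must verify that the two-dimensional projective representations of $D_k$ are exhausted by the displayed family, that the Schur multiplier contributes exactly the integer-versus-half-integer parity, and that the conjugation action on subgroups and representations yields no identifications beyond $l\leftrightarrow k-l$. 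Everything else is a direct finite-group transcription of the $O(2)$ argument.
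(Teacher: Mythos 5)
Your proposal follows exactly the route the paper intends: the paper's own ``proof'' of this proposition is a one-line appeal to the analogy with \Cref{pr.o2-class}, citing ergodic rigidity via \Cref{le.ab-rig,pr.ext-rig} and distinctness via \Cref{le.upto}, and your write-up is precisely that argument carried out. The reduction to induced actions $\ind_{\bH}^{D_K}B(V)$, the identification of the cyclic contributions with $\alpha^{(k)}$, and the treatment of the dihedral subgroups' linear and projective representations all match the $O(2)$ template as the paper prescribes.

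There is, however, one concrete point at which the ``direct finite-group transcription'' genuinely breaks down, and it sits exactly in the bookkeeping you flagged as the main risk: your assertion that, up to conjugacy, the subgroups of $D_K$ are one copy of $C_k$ and one copy of $D_k$ for each $k\mid K$, ``with conjugate choices producing isomorphic induced actions.'' In $O(2)$ all dihedral subgroups of a given order are conjugate because one can rotate by half of any angle; in $D_K$ this fails. Writing $D_K=\langle r,s\rangle$ and $d=K/k$, conjugation sends $\langle r^{d},r^{j}s\rangle$ to $\langle r^{d},r^{j\pm 2m}s\rangle$, so when $d$ is even the order-$2k$ dihedral subgroups fall into \emph{two} conjugacy classes (e.g.\ $\langle s\rangle$ and $\langle rs\rangle$ in $D_4$, or already in the Klein group $D_2$). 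By the necessity direction of \Cref{le.upto} (non-conjugate subgroups have non-isomorphic coset spaces, hence non-isomorphic centers of the induced algebras), the two classes induce non-equivalent ergodic actions, so the family $\beta^{(k)}_{l/2}$ as you (and the statement) parametrize it is not a complete and unambiguous list when $K/k$ is even. To be fair, this imprecision is present in the proposition as stated in the paper, so you have faithfully reproduced the intended argument; but a correct proof must either enlarge the index set to record the extra conjugacy class or argue explicitly why it is being suppressed. (The cyclic subgroups cause no such issue, as $C_K$ has a unique subgroup of each order, and the downstream growth estimate in the counterexample section survives the correction.)
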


The usual cocycle used to twist $O(2)$ into $O_{-1}(2)$ descends to a
cocycle on the function algebra of $D_K\subset O(2)$, so it can be
used to twist the latter into $(D_K)_{-1}$. We preserve the notation
$\lambda$ for the cocycle.

Pursuing the same strategy as for $O(2)$, we can now classify the
embeddable ergodic actions of $(D_K)_{-1}$ as an analogue of
\Cref{cor.cls}.

\begin{proposition}\label{pr.d-emb}
  The twisting equivalence $\lambda\triangleright$ induces a bijection
  between
  \begin{equation*}
    \{\alpha^{(k)},\ \beta^{(k)}_{l/2}\}
  \end{equation*}
  from \Cref{pr.d-class} for even $k|K$ and $l=0$ or odd and the
  isomorphism classes of embeddable ergodic actions of $(D_K)_{-1}$.
  \qedhere
\end{proposition}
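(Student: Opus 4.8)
The plan is to transport the proof of \Cref{cor.cls} to the finite setting essentially word for word, replacing $O(2)$ by $D_K$, the symbol ``$\infty$'' by the top index $K$, and the dichotomy ``finite or $\infty$'' by the divisibility constraint $k\mid K$. Exactly as over $O(2)$, the cocycle $\lambda$ induces a monoidal equivalence and hence, by \Cref{ergo_corres}, an equivalence $\lambda\triangleright$ between the categories of ergodic actions of $D_K$ and of $(D_K)_{-1}$ that moreover matches up coideal $*$-algebras on the two sides. Consequently the embeddable ergodic actions of $(D_K)_{-1}$ are precisely the images under $\lambda\triangleright$ of those actions in \Cref{pr.d-class} that embed, as comodule $*$-algebras, into the distinguished $D_K$-action $\cB$ for which $\lambda\triangleright\cB$ is isomorphic to the function algebra of $(D_K)_{-1}$.

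First I would pin down $\cB$ through the finite analogue of \Cref{pr.just-two}: among the actions in \Cref{pr.d-class}, the only ones whose underlying $D_K$-representation agrees with that of the function algebra $\cA$ of $D_K$ --- namely the regular representation, which by Peter--Weyl contains each irreducible with multiplicity equal to its dimension --- are $\alpha^{(1)}\cong\cA$ and $\beta^{(2)}_{1/2}$. The verification is the same Frobenius-reciprocity multiplicity count as in \Cref{pr.just-two}, now carried out inside the character table of the finite group $D_K$; here the evenness of $K$ guarantees both that $2\mid K$, so that $\beta^{(2)}_{1/2}$ genuinely occurs in the list, and that $D_2\cong\bZ_2^2$ embeds into $D_K$. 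Since the function algebra of $(D_K)_{-1}$ is obtained by twisting $\cA$ on both sides while $\lambda\triangleright$ twists only on the right, $\cB$ is a left twist of $\cA$ and is therefore noncommutative; as it must nonetheless have the regular representation as its underlying $D_K$-representation, it cannot be the commutative $\alpha^{(1)}$, and so $\cB=\beta^{(2)}_{1/2}$.

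It remains to decide which members of \Cref{pr.d-class} embed into $\beta^{(2)}_{1/2}$, and I would reproduce the three-way analysis of \Cref{cor.cls}. For the type-$\alpha$ actions, odd $k$ is excluded by the commutativity obstruction --- \Cref{eq:frob} would turn an embedding into a surjection of the commutative algebra $\res^{D_K}_{D_2}\alpha^{(k)}$ onto the noncommutative $W=M_2$ --- while even $k$ is produced by inducing in stages through $D_k$, the diagonal-subalgebra embedding $\res^{D_k}_{D_2}\ind^{D_k}_{C_k}\bC\subseteq W$ carrying over verbatim. The type-$\beta$ actions with $l=0$ reduce to the type-$\alpha$ case via $\beta^{(k)}_0\subset\alpha^{(k)}$; and for $l>0$ the $D_2$-invariant-dimension count eliminates even $l$, retains odd $l$ for even $k$, and disposes of odd $k$ by choosing an even $l'$ with $l'\equiv\pm l\ (\mathrm{mod}\ k)$ and exploiting $\beta^{(K)}_{l'/2}\subseteq\beta^{(k)}_{l/2}$ in place of the $\beta^{(\infty)}$ used over $O(2)$.

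The step demanding the most care is this last divisibility bookkeeping together with the finite analogue of \Cref{pr.just-two}: one must confirm that every multiplicity comparison which over $O(2)$ relied on the infinitude of the circle still holds once $T$ is the finite group $C_K$ and all inductions are constrained by $k\mid K$. In particular, for odd $k$ the congruence argument needs an \emph{even} representative $l'$ realizing a valid whole-group action $\beta^{(K)}_{l'/2}$, which exists precisely because $k\mid K/2$ for every odd divisor $k$ of the even integer $K$; this, together with the availability of $D_2\subseteq D_K$, is where the hypothesis that $K$ be even does genuine work. The remaining arguments are, as the text indicates, verbatim recapitulations of those for $O_{-1}(2)$.
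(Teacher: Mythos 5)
Your overall strategy---identify $\cB=\beta^{(2)}_{1/2}$ via the multiplicity count and then decide which members of \Cref{pr.d-class} embed into it---is exactly what the paper intends (it omits the proof, declaring it a verbatim recapitulation of \Cref{cor.cls}), and most of your case analysis does transport: the identification of $\cB$, the treatment of $\alpha^{(k)}$ for both parities, and the $l>0$ cases all go through as you describe. The genuine gap is in the $l=0$ type-$\beta$ cases. You dispose of these ``via $\beta^{(k)}_0\subset\alpha^{(k)}$,'' but that inclusion only \emph{produces} embeddings for even $k$; it cannot \emph{exclude} odd $k$, since a subalgebra of a non-embeddable comodule algebra may well be embeddable. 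The paper's own exclusion of $\beta^{(k)}_0$ for odd $k$ over $O(2)$ reruns the unitary-eigenvector/surjectivity argument used for $\alpha^{(k)}$, and that argument needs all four characters of $D_2$ to admit unitary eigenvectors in the restriction of the algebra to $D_2$. This holds for $O(2)/D_k$ (generic $D_2$-stabilizers are trivial) and for $D_K/C_k$ with $k$ odd (the $D_2$-action is free), but it \emph{fails} for $D_K/D_k$: the base coset $eD_k$ has $D_2$-stabilizer $\langle\sigma\rangle$, so the characters nontrivial on $\sigma$ have no unitary eigenvector supported on that orbit, and a $*$-algebra map to $W$ may kill all the other orbits.

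In fact $\beta^{(k)}_0=C(D_K/D_k)$ for odd $k\mid K$ \emph{does} embed into $\beta^{(2)}_{1/2}=\ind_{D_2}^{D_K}W$, so the proposed proof cannot be repaired as stated. Explicitly, let $\{y_1,y_2\}$ be the $D_2$-orbit of $eD_k$ (it has size two since $r\notin D_k$ while $\sigma\in D_2\cap D_k$), and set $p_1=\frac12\left(\begin{smallmatrix}1&1\\1&1\end{smallmatrix}\right)$, $p_2=1-p_1$ in $W$; one checks $\alpha_\sigma(p_i)=p_i$ and $\alpha_r(p_1)=p_2$, so $f\mapsto f(y_1)p_1+f(y_2)p_2$ is a $D_2$-equivariant unital $*$-algebra map $\res^{D_K}_{D_2}C(D_K/D_k)\to W$ whose Frobenius adjoint is injective (transitivity of $D_K$ on $D_K/D_k$) and hence an embedding of comodule $*$-algebras. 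The degenerate case $K=2$ makes the discrepancy transparent: $\cO(D_2)=\cO(\bZ_2^2)$ is spanned by grouplikes, so the two-sided twist is trivial, $(D_2)_{-1}=D_2$ has the five coideal subalgebras $\bC\Omega\subseteq\bC\widehat{D_2}$, whereas the claimed list contains only three elements. So the statement itself must be amended to include $\beta^{(k)}_0$ for odd $k\mid K$ (and your proof must replace the exclusion step by the embedding above); note that \Cref{cor.nogo2} survives this correction, since the count for $D_K$ and for the $l=0$ actions still grows only linearly in the number of divisors of $K$.
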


Finally, an immediate consequence of this of relevance to
\Cref{qu.mon-erg} is

\begin{corollary}\label{cor.nogo2}
  For infinitely many $K$ the sets of isomorphism classes of
  embeddable ergodic actions for $D_K$ and $(D_K)_{-1}$ have different
  cardinalities.
\end{corollary}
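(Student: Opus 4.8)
The plan is to produce an explicit infinite family of (even) $K$ for which the two cardinalities provably differ, the most convenient choice being $K=2p$ for $p$ an odd prime. The mechanism is the one already behind \Cref{cor.nogo1}: because $C(D_K)$ is commutative, \emph{every} embeddable ergodic action of the classical group $D_K$ is forced to be commutative, whereas $(D_K)_{-1}$ carries genuinely noncommutative embeddable ergodic actions — the $\beta^{(k)}_{l/2}$ with $l$ odd of \Cref{pr.d-emb} — whose number grows with $K$. I would therefore compare a bounded quantity (the classical side) against an unbounded one (the twisted side).

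First I would determine the embeddable ergodic actions of the classical group $D_K$. A faithful coaction-preserving embedding $C(\mathbb{X})\hookrightarrow C(D_K)$ into the commutative algebra $C(D_K)$ forces $C(\mathbb{X})$ to be commutative, so $\mathbb{X}$ is an ordinary finite $D_K$-set; ergodicity makes it transitive, hence a homogeneous space $D_K/H$. Thus the isomorphism classes of embeddable ergodic actions of $D_K$ are in bijection with the conjugacy classes of subgroups $H\le D_K$. For $K=2p$ this number does not depend on $p$: the divisors of $2p$ are $1,2,p,2p$, contributing four (normal) cyclic subgroups $C_d$, while the dihedral subgroups $D_d$ split into one or two conjugacy classes according as $K/d$ is odd or even, giving $2+1+2+1=6$; the classical total is therefore $4+6=10$.

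Next I would count on the twisted side via \Cref{pr.d-emb}, which indexes the embeddable ergodic actions of $(D_K)_{-1}$ by the $\alpha^{(k)}$ and $\beta^{(k)}_{l/2}$ with $k\mid K$ even and $l=0$ or odd. Taking $k=K=2p$ (even, and dividing itself), the admissible odd values $l\in\{1,3,\dots,p\}$ in the range $0\le l\le\lfloor K/2\rfloor=p$ already yield $(p+1)/2$ actions $\beta^{(2p)}_{l/2}$, pairwise non-isomorphic as recorded in \Cref{pr.d-class} and established via \Cref{le.upto}. Hence $(D_{2p})_{-1}$ has at least $(p+1)/2$ isomorphism classes of embeddable ergodic actions.

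Finally, for every odd prime $p$ with $(p+1)/2>10$, i.e.\ $p\ge 23$, the twisted count strictly exceeds the classical count $10$, so the two cardinalities differ; as there are infinitely many such primes, the corollary follows. The step demanding care — and where a purely verbatim transcription of the $O(2)$ argument would mislead — is the classical enumeration: for even $K$ the reflections of $D_K$ fall into two conjugacy classes, so a single order can host two non-conjugate copies of $D_k$ and hence two distinct homogeneous spaces $D_K/D_k$, a phenomenon absent in $O(2)$ where all reflections are conjugate. I sidestep this by using only that the classical count stays bounded (not its exact value) while the twisted count is unbounded; this bounded-versus-unbounded comparison remains valid even if the precise enumeration on either side is corrected by such conjugacy-class effects.
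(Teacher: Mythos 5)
Your argument is correct and is essentially the paper's: both proofs reduce to comparing the cardinalities read off from \Cref{pr.d-class} and \Cref{pr.d-emb}, the paper by noting that the twisted count grows quadratically in the number of divisors of $K$ while the classical count (necessarily commutative, hence consisting of homogeneous spaces) grows only linearly, and you by instantiating this with the explicit family $K=2p$ and the bound $(p+1)/2>10$ for primes $p\ge 23$. Your extra care about the two conjugacy classes of reflections (hence of subgroups $D_k$ with $K/k$ even) is a sensible refinement, and as you observe it cannot disturb the bounded-versus-unbounded comparison.
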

\begin{proof}
  This is a simple numerical estimate based on the classification of
  embeddable ergodic actions of $(D_K)_{-1}$ from
  \Cref{pr.d-emb}. That result shows that the number of isomorphism
  classes for $(D_K)_{-1}$ grows quadratically with the number of
  divisors of $K$. On the other hand, for $D_K$, the ergodic actions
  that are embeddable are the $\alpha$s and those $\beta$s in
  \Cref{pr.d-class} with $l=0$. In conclusion, the number of such
  isomorphism classes grows linearly with the number of divisors of
  $K$.
\end{proof}

\nocite{crespo}           \nocite{DC}                        \nocite{BC}  
 

\bibliographystyle{plain}

\begin{bibdiv}
\begin{biblist}

\bib{BC}{article}{
      author={Baaj, Saad},
      author={Crespo, Jonathan},
       title={Equivalence monoidale de groupes quantiques et k-théorie
  bivariante},
        date={2015-07-24},
      eprint={http://arxiv.org/abs/1507.06808v1},
         url={http://arxiv.org/abs/1507.06808v1},
}

\bib{BBC}{article}{
      author={Banica, Teo},
      author={Bichon, Julien},
      author={Collins, Benoit},
       title={The hyperoctahedral quantum group},
        date={2007},
     journal={Journal Ramanujan Math Society},
      volume={22},
       pages={345\ndash 384},
}

\bib{bb-4pts}{article}{
      author={Banica, Teodor},
      author={Bichon, Julien},
       title={Quantum groups acting on 4 points},
        date={2009},
        ISSN={0075-4102},
     journal={J. Reine Angew. Math.},
      volume={626},
       pages={75\ndash 114},
         url={http://dx.doi.org/10.1515/CRELLE.2009.003},
      review={\MR{2492990}},
}

\bib{bch-coc}{article}{
      author={Bichon, Julien},
       title={Hopf-{G}alois objects and cogroupoids},
        date={2014},
        ISSN={0041-6932},
     journal={Rev. Un. Mat. Argentina},
      volume={55},
      number={2},
       pages={11\ndash 69},
      review={\MR{3285340}},
}

\bib{BRV}{article}{
      author={Bichon, Julien},
      author={De~Rijdt, An},
      author={Vaes, Stefaan},
       title={Ergodic coactions with large multiplicity and monoidal
  equivalence of quantum groups},
        date={2006},
        ISSN={0010-3616},
     journal={Comm. Math. Phys.},
      volume={262},
      number={3},
       pages={703\ndash 728},
         url={http://dx.doi.org/10.1007/s00220-005-1442-2},
      review={\MR{2202309}},
}

\bib{bc-erg}{article}{
      author={Boca, Florin~P.},
       title={Ergodic actions of compact matrix pseudogroups on
  {$C^*$}-algebras},
        date={1995},
        ISSN={0303-1179},
     journal={Ast\'erisque},
      number={232},
       pages={93\ndash 109},
        note={Recent advances in operator algebras (Orl\'eans, 1992)},
      review={\MR{1372527}},
}

\bib{commer:2016}{article}{
      author={Commer, Kenny~De},
       title={Actions of compact quantum groups},
        date={2016-04-01},
      eprint={http://arxiv.org/abs/1604.00159v1},
         url={http://arxiv.org/abs/1604.00159v1},
}

\bib{crespo}{thesis}{
      author={Crespo, Jonathan},
       title={{Monoidal equivalence of locally compact quantum groups and
  application to bivariant K-theory}},
        type={Theses},
        date={2015},
      number={2015CLF22621},
         url={https://tel.archives-ouvertes.fr/tel-01276597},
}

\bib{DC}{article}{
      author={De~Commer, Kenny},
       title={Monoidal equivalence for locally compact quantum groups},
        date={2008},
     journal={arXiv preprint arXiv:0804.2405},
}

\bib{de2010actions}{inproceedings}{
      author={De~Rijdt, An},
      author={Vander~Vennet, Nikolas},
       title={Actions of monoidally equivalent compact quantum groups and
  applications to probabilistic boundaries},
organization={Association des annales de l'institut Fourier},
        date={2010},
   booktitle={Annales de l'institut fourier},
      volume={60},
       pages={169\ndash 216},
}

\bib{dk}{article}{
      author={Dijkhuizen, Mathijs~S.},
      author={Koornwinder, Tom~H.},
       title={C{QG} algebras: a direct algebraic approach to compact quantum
  groups},
        date={1994},
        ISSN={0377-9017},
     journal={Lett. Math. Phys.},
      volume={32},
      number={4},
       pages={315\ndash 330},
         url={http://dx.doi.org/10.1007/BF00761142},
      review={\MR{1310296}},
}

\bib{FS}{article}{
      author={Franz, Uwe},
      author={Skalski, Adam},
       title={A new characterisation of idempotent states on finite and compact
  quantum groups},
        date={2009},
     journal={Comptes Rendus Mathematique},
      volume={347},
      number={17},
       pages={991\ndash 996},
}

\bib{fr-sk-id}{article}{
      author={Franz, Uwe},
      author={Skalski, Adam},
       title={On idempotent states on quantum groups},
        date={2009},
        ISSN={0021-8693},
     journal={J. Algebra},
      volume={322},
      number={5},
       pages={1774\ndash 1802},
         url={http://dx.doi.org/10.1016/j.jalgebra.2009.05.037},
      review={\MR{2543634}},
}

\bib{FST}{article}{
      author={Franz, Uwe},
      author={Skalski, Adam},
      author={Tomatsu, Reiji},
       title={Idempotent states on compact quantum groups and their
  classification on $ \mathrm{U}_q (2) $, $\mathrm{SU}_q (2)$, and
  $\mathrm{SO}_q (3) $},
        date={2013},
     journal={Journal of Noncommutative Geometry},
      volume={7},
      number={1},
       pages={221\ndash 254},
}

\bib{hls-erg}{article}{
      author={H\o~egh Krohn, R.},
      author={Landstad, M.~B.},
      author={St\o~rmer, E.},
       title={Compact ergodic groups of automorphisms},
        date={1981},
        ISSN={0003-486X},
     journal={Ann. of Math. (2)},
      volume={114},
      number={1},
       pages={75\ndash 86},
         url={http://dx.doi.org/10.2307/1971377},
      review={\MR{625345}},
}

\bib{kt1}{article}{
      author={Kustermans, Johan},
      author={Tuset, Lars},
       title={A survey of {$C^*$}-algebraic quantum groups. {I}},
        date={1999},
        ISSN={0791-5578},
     journal={Irish Math. Soc. Bull.},
      number={43},
       pages={8\ndash 63},
      review={\MR{1741102}},
}

\bib{mont}{book}{
      author={Montgomery, Susan},
       title={Hopf algebras and their actions on rings},
      series={CBMS Regional Conference Series in Mathematics},
   publisher={Published for the Conference Board of the Mathematical Sciences,
  Washington, DC; by the American Mathematical Society, Providence, RI},
        date={1993},
      volume={82},
        ISBN={0-8218-0738-2},
         url={http://dx.doi.org/10.1090/cbms/082},
      review={\MR{1243637}},
}

\bib{podles}{article}{
      author={Podle\'s, Piotr},
       title={Symmetries of quantum spaces. {S}ubgroups and quotient spaces of
  quantum {${\rm SU}(2)$} and {${\rm SO}(3)$} groups},
        date={1995},
        ISSN={0010-3616},
     journal={Comm. Math. Phys.},
      volume={170},
      number={1},
       pages={1\ndash 20},
         url={http://projecteuclid.org/euclid.cmp/1104272946},
      review={\MR{1331688}},
}

\bib{rad}{book}{
      author={Radford, David~E.},
       title={Hopf algebras},
      series={Series on Knots and Everything},
   publisher={World Scientific Publishing Co. Pte. Ltd., Hackensack, NJ},
        date={2012},
      volume={49},
        ISBN={978-981-4335-99-7; 981-4335-99-1},
      review={\MR{2894855}},
}

\bib{ss17}{article}{
      author={Salmi, Pekka},
      author={Skalski, Adam},
       title={Idempotent {S}tates on {L}ocally {C}ompact {Q}uantum {G}roups
  {II}},
        date={2017},
        ISSN={0033-5606},
     journal={Q. J. Math.},
      volume={68},
      number={2},
       pages={421\ndash 431},
         url={http://dx.doi.org/10.1093/qmath/haw045},
      review={\MR{3667207}},
}

\bib{swe}{book}{
      author={Sweedler, Moss~E.},
       title={Hopf algebras},
      series={Mathematics Lecture Note Series},
   publisher={W. A. Benjamin, Inc., New York},
        date={1969},
      review={\MR{0252485}},
}

\bib{wng-erg}{article}{
      author={Wang, Shuzhou},
       title={Ergodic actions of universal quantum groups on operator
  algebras},
        date={1999},
        ISSN={0010-3616},
     journal={Comm. Math. Phys.},
      volume={203},
      number={2},
       pages={481\ndash 498},
         url={http://dx.doi.org/10.1007/s002200050622},
      review={\MR{1697607}},
}

\bib{w2}{article}{
      author={Wassermann, Antony},
       title={Ergodic actions of compact groups on operator algebras. {II}.
  {C}lassification of full multiplicity ergodic actions},
        date={1988},
        ISSN={0008-414X},
     journal={Canad. J. Math.},
      volume={40},
      number={6},
       pages={1482\ndash 1527},
  url={http://dx.doi.org.offcampus.lib.washington.edu/10.4153/CJM-1988-068-4},
      review={\MR{990110}},
}

\bib{w3}{article}{
      author={Wassermann, Antony},
       title={Ergodic actions of compact groups on operator algebras. {III}.
  {C}lassification for {${\rm SU}(2)$}},
        date={1988},
        ISSN={0020-9910},
     journal={Invent. Math.},
      volume={93},
      number={2},
       pages={309\ndash 354},
  url={http://dx.doi.org.offcampus.lib.washington.edu/10.1007/BF01394336},
      review={\MR{948104}},
}

\bib{w1}{article}{
      author={Wassermann, Antony},
       title={Ergodic actions of compact groups on operator algebras. {I}.
  {G}eneral theory},
        date={1989},
        ISSN={0003-486X},
     journal={Ann. of Math. (2)},
      volume={130},
      number={2},
       pages={273\ndash 319},
         url={http://dx.doi.org.offcampus.lib.washington.edu/10.2307/1971422},
      review={\MR{1014926}},
}

\bib{wor-psd}{article}{
      author={Woronowicz, S.~L.},
       title={Compact matrix pseudogroups},
        date={1987},
        ISSN={0010-3616},
     journal={Comm. Math. Phys.},
      volume={111},
      number={4},
       pages={613\ndash 665},
         url={http://projecteuclid.org/euclid.cmp/1104159726},
      review={\MR{901157}},
}

\bib{wor-su2}{article}{
      author={Woronowicz, S.~L.},
       title={Twisted {${\rm SU}(2)$} group. {A}n example of a noncommutative
  differential calculus},
        date={1987},
        ISSN={0034-5318},
     journal={Publ. Res. Inst. Math. Sci.},
      volume={23},
      number={1},
       pages={117\ndash 181},
         url={http://dx.doi.org/10.2977/prims/1195176848},
      review={\MR{890482}},
}

\bib{wor-sun}{article}{
      author={Woronowicz, S.~L.},
       title={Tannaka-{K}re\u\i n duality for compact matrix pseudogroups.
  {T}wisted {${\rm SU}(N)$} groups},
        date={1988},
        ISSN={0020-9910},
     journal={Invent. Math.},
      volume={93},
      number={1},
       pages={35\ndash 76},
         url={http://dx.doi.org/10.1007/BF01393687},
      review={\MR{943923}},
}

\bib{woronowicz}{article}{
      author={Woronowicz, Stanis{\l}aw~L},
       title={Compact quantum groups},
        date={1998},
     journal={Sym{\'e}tries quantiques (Les Houches, 1995)},
      volume={845},
       pages={884},
}

\end{biblist}
\end{bibdiv}

\Addresses

\end{document}